\renewcommand{\PrintDOI}[1]{\doi{#1}}
\let\polishlcross=\l
\def\l{\ifmmode\ell\else\polishlcross\fi}
\def\moverlay{\mathpalette\mov@rlay}
\def\mov@rlay#1#2{\leavevmode\vtop{    \baselineskip\z@skip\lineskiplimit-\maxdimen%
    \ialign{\hfil$\m@th#1##$\hfil\cr#2\crcr}}}
\newcommand{\charfusion}[3][\mathord]{
    #1{\ifx#1\mathop\vphantom{#2}\fi
        \mathpalette\mov@rlay{#2\cr#3}
      }
    \ifx#1\mathop\expandafter\displaylimits\fi}
\newcommand{\oldqed}{}
\def\endofFact{\hfill\scalebox{.6}{$\Box$}}
\newenvironment{claimproof}[1][Proof]{
  \renewcommand{\oldqed}{\qedsymbol}
  \renewcommand{\qedsymbol}{\endofFact}
  \begin{proof}[#1]
}{
  \end{proof}
  \renewcommand{\qedsymbol}{\oldqed}
}
\DeclareFontFamily{U}  {MnSymbolC}{}
\DeclareSymbolFont{MnSyC}         {U}  {MnSymbolC}{m}{n}
\DeclareFontShape{U}{MnSymbolC}{m}{n}{%
    <-6>  MnSymbolC5
   <6-7>  MnSymbolC6
   <7-8>  MnSymbolC7
   <8-9>  MnSymbolC8
   <9-10> MnSymbolC9
  <10-12> MnSymbolC10
  <12->   MnSymbolC12}{}
\DeclareMathSymbol{\powerset}{\mathord}{MnSyC}{180}
\def\namedlabel#1#2{\begingroup
    #2%
    \def\@currentlabel{#2}%
    \phantomsection\label{#1}\endgroup
}
\setlist[itemize]{leftmargin=1cm}
\setlist[enumerate]{leftmargin=1cm}
\def\circ{K^\circlearrowright}
\def\circcycle{C^\circlearrowright}
\newtheorem{theorem}{Theorem}[section]
\newtheorem{problem}{Problem}
\newtheorem*{definition}{Definition}
\newtheorem*{theoremnull}{Theorem}
\newtheorem{conjecture}{Conjecture}
\newtheorem{proposition}[theorem]{Proposition}
\newtheorem{lemma}[theorem]{Lemma}
\newtheorem{corollary}[theorem]{Corollary}
\newtheorem*{claim}{Claim}
\renewcommand{\O}[1]{\mathcal{D}(#1,\circ_3)}
\renewcommand{\o}[1]{D(#1,\circ_3)}
\newcommand{\ext}{{\rm{ext}}}
\newcommand{\ex}{\operatorname{ex}}
\newcommand{\bound}{{\lfloor n^2/4\rfloor}}
\title{Counting graph orientations\\with no directed triangles}
\begin{document}
\onehalfspace%
\footskip=28pt

\author[P.~Araújo]{Pedro Araújo}

\author[F.~Botler]{Fábio Botler}

\author[G.~O.~Mota]{Guilherme Oliveira Mota}

\address{IMPA, Estrada Dona Castorina 110, Jardim Bot\^anico, Rio de Janeiro, RJ, Brazil}
\email{pedroc@impa.br}

\address{Programa de Engenharia de Sistemas e Computação, Universidade Federal do Rio de Janeiro, Rio de Janeiro, Brazil}
\email{fbotler@cos.ufrj.br}

\address{Centro de Matem\'atica, Computa\c c\~ao e Cogni\c c\~ao \\Universidade Federal do ABC, Santo Andr\'e, Brazil}
\email{g.mota@ufabc.edu.br}

\thanks{\tiny
P.~Araújo was partially supported by CNPq;
F. Botler was supported by CNPq (423395/2018-1), and by FAPERJ (211.305/2019);
G.~O.~Mota was partially supported by CNPq (304733/2017-2, 428385/2018-4) and FAPESP (2018/04876-1, 2019/13364-7).
The research that led to this paper started in WoPOCA 2019, which was financed by FAPESP (2015/11937-9) and CNPq (425340/2016-3, 423833/2018-9).
This study was financed in part by the Coordena\c{c}\~ao de Aperfei\c{c}oamento de Pessoal de N\'ivel Superior, Brasil (CAPES), Finance Code~001.
FAPERJ is the Rio de Janeiro Research Foundation. 
FAPESP is the S\~ao Paulo Research Foundation.  
CNPq is the National Council for Scientific and Technological Development of Brazil.}

\begin{abstract}
Alon and Yuster proved that the number of orientations of any $n$-vertex graph in which every $K_3$ is transitively oriented is at most $2^{\lfloor n^2/4\rfloor}$ for $n \geq 10^4$ and conjectured that the precise lower bound on $n$ should be $n \geq 8$. We confirm their conjecture and, additionally, characterize the extremal families by showing that the balanced complete bipartite graph with $n$ vertices is the only $n$-vertex graph for which there are exactly $2^{\lfloor n^2/4\rfloor}$ such orientations.
\end{abstract}

\maketitle

\section{Introduction}

Given a graph $G$ and an oriented graph $\vec H$, we say that $\vec G$ is an \emph{$\vec H$-free orientation} of $G$ if $\vec G$ contains no copy of $\vec H$.
We denote by $\mathcal{D}(G,\vec H)$ the family of $\vec H$-free orientations of $G$ and we write $D(G,\vec H) = |\mathcal{D}(G,\vec H)|$.
In 1974, Erd\H{o}s~\cite{Er74} posed the problem of determining the maximum number of $\vec{H}$-free orientations of~$G$, for every $n$-vertex graph $G$.
Formally, we define
$
D(n,\vec H) = \max\{\mathcal{D}(G,\vec H)\colon G\text{ is an $n$-vertex graph}\}.
$

Since every orientation of an $H$-free graph does not contain any orientation $\vec H$ of $H$, it is fairly straightforward to see that $D(n,\vec H)\geq 2^{\ex(n,H)}$,
where $\ex(n,H)$ is the maximum number of edges in an $H$-free graph on $n$ vertices.
For a tournament $\vec T_k$ on $k$ vertices, Alon and Yuster~\cite{AlYu06} proved that~$D(n,\vec T_k) = 2^{\ex(n,K_k)}$ for $n\geq n_0$ with a very large $n_0$, as they use the Regularity Lemma~\cite{Sz75}. 
For tournaments with three vertices, they avoid using the regularity lemma to prove that $D(n,T_3) = 2^{\lfloor n^2/4\rfloor}$ for $n\geq n_0$, where $n_0$ is slightly less than~$10000$.
Furthermore, for the strongly connected triangle, denoted by $\circ_3$, using a computer program they verified that $D(8,\circ_3)=2^{16}$ and $D(n,\circ_3)=n!$ for $n\leq 7$.
In view of this, Alon and Yuster posed the following conjecture.
\begin{conjecture}[Alon and Yuster~\cite{AlYu06}]
\label{conj:AY}
For $n\geq 1$, we have
	$
	D(n,\circ_3) = \max\{2^{\lfloor n^2/4\rfloor},n!\}.
	$
\end{conjecture}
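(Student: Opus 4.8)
The plan is to deduce Conjecture~\ref{conj:AY} from the following assertion, which is proved by induction on $n$: \emph{for every $n$-vertex graph $G$ with $n\ge 8$ one has $D(G,\circ_3)\le 2^{\bound}$, with equality if and only if $G$ is the balanced complete bipartite graph $K_{\lceil n/2\rceil,\lfloor n/2\rfloor}$.} This suffices: for $n\le 7$ one has $2^{\bound}\le n!$ and $D(n,\circ_3)=n!$ by the computer verification of Alon and Yuster; the base case $n=8$ is again their verification (which also identifies $K_{4,4}$ as the unique extremal graph); for $n\ge 8$ one has $2^{\bound}\ge n!$; and the lower bound $D(n,\circ_3)\ge\max\{2^{\bound},n!\}$ is immediate because $K_{\lceil n/2\rceil,\lfloor n/2\rfloor}$ is triangle-free while a tournament is $\circ_3$-free exactly when it is transitive, so $K_n$ has exactly $n!$ such orientations.

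For the inductive step, fix a vertex $v$ of $G$. Restriction to $G-v$, together with recording the orientation of the $\deg(v)$ edges at $v$, is injective; and for a fixed $\circ_3$-free orientation $\vec G'$ of $G-v$, the orientations of the edges at $v$ that extend $\vec G'$ to a $\circ_3$-free orientation of $G$ are precisely those whose out-neighbourhood $S\subseteq N(v)$ is \emph{up-closed} in $\vec G'[N(v)]$ (if $u\in S$ and $u\to w$ inside $N(v)$ then $w\in S$) --- any other choice closes a directed triangle through $v$, and no up-closed one does. Hence $D(G,\circ_3)\le Z_v\cdot D(G-v,\circ_3)$, where $Z_v$ is the maximum number of up-closed subsets of $\vec G'[N(v)]$ over $\circ_3$-free orientations $\vec G'$ of $G-v$. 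Since $\bound-\lfloor(n-1)^2/4\rfloor=\lfloor n/2\rfloor$, it suffices to exhibit a vertex $v$ with $Z_v\le 2^{\lfloor n/2\rfloor}$. If $\delta(G)\le\lfloor n/2\rfloor$, take $v$ of minimum degree and use $Z_v\le 2^{\deg(v)}\le 2^{\lfloor n/2\rfloor}$; then $D(G,\circ_3)\le 2^{\bound}$ by induction, and equality forces $D(G-v,\circ_3)=2^{\lfloor(n-1)^2/4\rfloor}$ and $Z_v=2^{\deg(v)}=2^{\lfloor n/2\rfloor}$, hence (by the induction hypothesis) $G-v=K_{\lceil(n-1)/2\rceil,\lfloor(n-1)/2\rfloor}$ and every subset of $N(v)$ is up-closed for every $\vec G'$, i.e.\ $N(v)$ is independent in $G$; since $N(v)$ then lies inside a single side of $G-v$, a size comparison gives $G=K_{\lceil n/2\rceil,\lfloor n/2\rfloor}$.

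It remains to treat $\delta(G)\ge\lfloor n/2\rfloor+1$, where $K_{\lceil n/2\rceil,\lfloor n/2\rfloor}$ does not appear, so a strict inequality should hold. Let $v$ have minimum degree and put $L:=G[N(v)]$, so $|L|=\delta(G)$; since each vertex of $N(v)$ has at least $\delta(G)$ neighbours, of which at most $n-\delta(G)$ lie outside $N(v)$, the minimum degree of $L$ is at least $2\delta(G)-n\ge 1$. The heart of the matter is a lemma bounding the number of up-closed subsets of a $\circ_3$-free orientation $\vec L$ of any graph $L$ by roughly $2^{\,|L|-\delta(L)}$; it is proved by analysing the reachability preorder of $\vec L$, exploiting that $\vec L$ restricted to any clique of $L$ is a transitive tournament (and so has very few up-closed sets) together with the elementary bound $\alpha(L)\le|L|-\delta(L)$. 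Its extremal case is an orientation of a complete bipartite graph $K_{\delta(L),\,|L|-\delta(L)}$ with all edges pointing the same way, which shows the sharp bound carries an additional $2^{\delta(L)}$ term. Feeding $|L|=\delta(G)$ and $\delta(L)\ge 2\delta(G)-n$ into the lemma yields $Z_v\le 2^{\,n-\delta(G)+O(1)}$, which is at most $2^{\lfloor n/2\rfloor}$ --- and strictly smaller outside a few boundary values of $\delta(G)$ --- once $\delta(G)\ge\lfloor n/2\rfloor+1$; the remaining boundary cases, and the near-complete graphs (where $\delta(G)$ is close to $n-1$, dealt with via $D(K_n,\circ_3)=n!<2^{\bound}$), are handled by short direct arguments.

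The main obstacle is exactly this last lemma: obtaining a bound on $Z_v$ that is quantitatively tight. The naive estimate $Z_v\le(3/4)^{\nu(L)}2^{\deg(v)}$ coming from a matching of size $\nu(L)$ in the link --- an up-closed set restricted to one matching edge has $3$ rather than $4$ options --- is \emph{not} strong enough, because the factor $\log_2(4/3)$ wastes too much even when $\nu(L)$ is taken as large as the Erd\H{o}s--Gallai bound (forced by the minimum degree of $L$) allows. One genuinely must use the $\circ_3$-freeness of $\vec L$, and then track carefully the additive $2^{\delta(L)}$ term, the case of disconnected $L$, and the parity-sensitive boundary $\delta(G)\in\{\lceil n/2\rceil\}$ --- these are the points where the argument needs real care.
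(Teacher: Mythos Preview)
Your approach is quite different from the paper's. The paper does not remove a single vertex; instead it removes a maximum clique (or, when $\omega(G)\le 4$, a small collection of disjoint cliques, triangles and matching edges) and bounds the number of ways to orient the edges between these pieces via a toolbox of extension lemmas (Proposition~\ref{lemma:compatible-vertex-clique} through Lemma~\ref{lemma:compatible-K3-K3}). It also relies on a separately proved strengthened base covering all $n$ with $9\le n\le 7+\min\{\omega(G),8\}$ (Proposition~\ref{prop:small-cases}), not just $n=8$. Your minimum-degree step for $\delta(G)\le\lfloor n/2\rfloor$ is clean and does appear in the paper (it is exactly inequality~\eqref{eq:grandever3} in the case $r=4$), and your characterisation of extensions at $v$ as up-closed sets in $\vec G'[N(v)]$ is correct and elegant.

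The genuine gap is the ``key lemma'' for the dense case $\delta(G)\ge\lfloor n/2\rfloor+1$. You assert a bound of the shape
\[
\#\{\text{up-closed subsets of }\vec L\}\ \lesssim\ 2^{\,|L|-\delta(L)}+2^{\delta(L)},
\]
sketch a proof via the reachability preorder and $\alpha(L)\le|L|-\delta(L)$, and then explicitly call this ``the main obstacle'' --- so this is a plan, not a proof. Two concrete problems remain even at the level of the plan. First, the bound you write down is not monotone in $\delta(L)$ (it is U-shaped), so ``feeding in $\delta(L)\ge 2\delta(G)-n$'' does not produce an upper bound; what you actually need is that the number of up-closed sets is a \emph{decreasing} function of $\delta(L)$, which is a different and unproved statement. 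Second, your cited extremal example $K_{\delta(L),\,|L|-\delta(L)}$ only makes sense when $\delta(L)\le|L|/2$; for denser links (which certainly occur --- e.g.\ $G$ close to $K_n$ gives $L$ close to $K_{\delta(G)}$) the formula gives nothing useful, and you defer these to ``short direct arguments'' that are not supplied. It may be that a correct lemma along these lines exists, but as written the dense case has no proof, and none of the paper's machinery is available to fill the hole.
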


Using a simple computer program, we checked that \(K_{4,4}\) is the only $8$-vertex graph that maximizes $D(8,\circ_3)$.
This fact together with the verification made by Alon and Yuster for graphs with at most seven vertices implies the following proposition.
\begin{proposition}\label{prop:Alon-small-cases}
$D(8,\circ_3)=2^{16}$ and among all graphs with $8$ vertices, $D(G,\circ_3)=2^{16}$ if and only if $G \simeq K_{4,4}$. Furthermore, $D(n,\circ_3)=n!$ for $1\leq n\leq 7$.
\end{proposition}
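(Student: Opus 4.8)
The plan is to treat this as a finite verification whose only real idea is a reduction that confines the brute force to a small family of dense graphs. For $1\le n\le 7$ one has $\max\{2^{\lfloor n^2/4\rfloor},n!\}=n!$, and the equality $D(n,\circ_3)=n!$ is exactly the computer check carried out in~\cite{AlYu06} (it can also be re-confirmed by the same kind of search used below, since $K_n$ already gives $n!$ orientations, namely the transitive tournaments). So I would concentrate on $n=8$, where $\max\{2^{16},8!\}=2^{16}$, and the claim to establish is that $K_{4,4}$ is the \emph{unique} $8$-vertex graph with $D(G,\circ_3)=2^{16}$.

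\emph{Step 1: reduce to dense graphs via Mantel's theorem.} For every graph $G$ one has $D(G,\circ_3)\le 2^{|E(G)|}$, with equality precisely when every orientation of $G$ is $\circ_3$-free, i.e.\ when $G$ is triangle-free (if $xyz$ is a triangle, orient it cyclically and the remaining edges arbitrarily to produce a $\circ_3$-containing orientation). In particular $K_{4,4}$, being triangle-free with $16$ edges, satisfies $D(K_{4,4},\circ_3)=2^{16}$, so $D(8,\circ_3)\ge 2^{16}$. Conversely, if $G$ has $8$ vertices, $|E(G)|\le 16$, and $D(G,\circ_3)\ge 2^{16}$, then necessarily $|E(G)|=16$ and $G$ is triangle-free, and Mantel's theorem together with its uniqueness statement forces $G\simeq K_{4,4}$. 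Thus it remains only to show $D(G,\circ_3)<2^{16}$ for every $8$-vertex graph $G$ with $|E(G)|\ge 17$.

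\emph{Step 2: the dense case by a short computation.} Up to isomorphism there are only finitely many such graphs (their complements have at most $11$ edges), and I would enumerate them with a standard generator such as \texttt{nauty}. For a fixed $G$, I would compute $D(G,\circ_3)$ by a depth-first search that orients the edges one at a time and abandons a branch as soon as the partial orientation closes a directed triangle; the number of surviving leaves is $D(G,\circ_3)$. This is far cheaper than the naive $2^{|E(G)|}$ bound suggests — already the extreme case $G=K_8$ has only $8!=40320$ surviving orientations (the transitive tournaments), well under $2^{16}$ — and can be accelerated by fixing the orientation along a spanning tree and quotienting out $\mathrm{Aut}(G)$. Reading off $D(G,\circ_3)<2^{16}$ for all $G$ with at least $17$ edges, and combining with Step~1, gives $D(8,\circ_3)=2^{16}$ with $K_{4,4}$ the unique extremal graph. (Alternatively one may skip Step~1 altogether and simply run this search over all $12346$ graphs on $8$ vertices.)

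The main obstacle is not mathematical but computational reliability: one must be confident that the graph enumeration is exhaustive and that the orientation-counting routine is both correct and fast enough on the handful of genuinely dense graphs. Both are easy to stress-test — e.g.\ by recomputing $D(G,\circ_3)$ for small graphs where the value is independently known, by a sanity check against the total $2^{|E(G)|}$ of all orientations, and by matching the $n\le 7$ output with~\cite{AlYu06}.
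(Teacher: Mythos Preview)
Your approach is correct and matches the paper's: for $n\le 7$ both you and the paper defer to the computer verification in~\cite{AlYu06}, and for $n=8$ the paper simply reports running ``a simple computer program'' over all $8$-vertex graphs, so your Mantel-based reduction to $|E(G)|\ge 17$ is a pleasant but inessential shortcut. One small caveat: the suggested acceleration of fixing orientations along an entire spanning tree is not a valid symmetry reduction for counting $\circ_3$-free orientations (only the global edge-reversal lets you fix a single edge and halve the count), though this does not affect the soundness of the underlying brute-force search.
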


In this paper we prove the following result that confirms Conjecture~\ref{conj:AY} and states that the balanced complete bipartite graph is the only $n$-vertex graph for which there are exactly $2^{\lfloor n^2/4\rfloor}$ orientations with no copy of $\circ_3$.

\begin{theorem}\label{thm:main}
	For $n\geq 8$, we have
	$
	D(n,\circ_3) = 2^{\lfloor n^2/4\rfloor}.
	$
	Furthermore, among all graphs~$G$ with $n$ vertices, $D(G,\circ_3)=2^{\lfloor n^2/4\rfloor}$ if and only if $G \simeq K_{\lfloor n/2\rfloor, \lceil n/2\rceil}$.
\end{theorem}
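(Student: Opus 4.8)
\emph{Lower bound.} This is immediate: $K_{\lfloor n/2\rfloor,\lceil n/2\rceil}$ is triangle-free, so each of its $2^{\lfloor n^2/4\rfloor}$ orientations is vacuously $\circ_3$-free, whence $D(n,\circ_3)\ge 2^{\lfloor n^2/4\rfloor}$. The plan for the matching upper bound together with the characterization is induction on $n$, with base case $n=8$ supplied by Proposition~\ref{prop:Alon-small-cases}; so fix $n\ge 9$, a graph $G$ on $n$ vertices, and assume the theorem for $n-1$.

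\emph{The engine.} For any $v\in V(G)$ I would use
\[
D(G,\circ_3)=\sum_{\vec H\in\mathcal D(G-v,\circ_3)}\mathrm{ext}(\vec H,v),
\]
where $\mathrm{ext}(\vec H,v)$ counts the orientations of the $\deg(v)$ edges at $v$ that create no cyclic triangle through $v$. Identifying such an orientation with the set $S\subseteq N_G(v)$ of out-neighbours of $v$, a triangle $vab$ becomes cyclic exactly when $a\in S$, $b\notin S$ and $\vec H$ orients $ab$ as $a\to b$; hence $\mathrm{ext}(\vec H,v)$ equals the number of subsets $S\subseteq N_G(v)$ that are closed under out-neighbourhoods in the orientation that $\vec H$ induces on $G[N_G(v)]$. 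In particular $\mathrm{ext}(\vec H,v)\le 2^{\deg v}$ always, and $\mathrm{ext}(\vec H,v)\le 3\cdot 2^{\deg v-2}$ whenever $v$ lies in a triangle of $G$. Feeding this into the identity, together with the inductive estimate $D(G-v,\circ_3)\le 2^{\lfloor(n-1)^2/4\rfloor}$ and the arithmetic fact $\lfloor n^2/4\rfloor-\lfloor(n-1)^2/4\rfloor=\lfloor n/2\rfloor$, drives everything. Suppose first $\delta(G)\le\lfloor n/2\rfloor$ and let $v$ have minimum degree. If $\deg v\le\lfloor n/2\rfloor-1$ then $D(G,\circ_3)\le 2^{\deg v}\cdot 2^{\lfloor(n-1)^2/4\rfloor}\le 2^{\lfloor n^2/4\rfloor-1}$, so $G$ is not extremal; if $\deg v=\lfloor n/2\rfloor$ but $v$ lies in a triangle then $D(G,\circ_3)\le 3\cdot 2^{\lfloor n/2\rfloor-2}\cdot 2^{\lfloor(n-1)^2/4\rfloor}=\tfrac34\cdot 2^{\lfloor n^2/4\rfloor}$, again not extremal. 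So an extremal $G$ in this range must have $\deg v=\lfloor n/2\rfloor$, $v$ in no triangle, and $D(G-v,\circ_3)=2^{\lfloor(n-1)^2/4\rfloor}$; the induction hypothesis (valid since $n-1\ge 8$) forces $G-v\simeq K_{\lfloor(n-1)/2\rfloor,\lceil(n-1)/2\rceil}$, whose independent sets lie inside a single part. As $N_G(v)$ is such an independent set of size $\lfloor n/2\rfloor$, it must be a part of maximum size, and one checks directly that then $G\simeq K_{\lfloor n/2\rfloor,\lceil n/2\rceil}$. This disposes of the case $\delta(G)\le\lfloor n/2\rfloor$, with equality there only for $K_{\lfloor n/2\rfloor,\lceil n/2\rceil}$.

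\emph{The hard case $\delta(G)\ge\lfloor n/2\rfloor+1$.} Here $\mathrm{ext}(\vec H,v)\le 2^{\deg v}$ overshoots and one must show such a dense $G$ is never extremal. I would exploit that every edge now lies in many triangles: for a minimum-degree vertex $v$, the graph $G[N_G(v)]$ has minimum degree at least $\delta(G)+\deg v-n\ge 1$, i.e.\ it is itself dense, and a $\circ_3$-free orientation of a dense graph has few out-closed sets. Quantitatively, the number of out-closed subsets of \emph{any} $\circ_3$-free orientation of a graph $D$ on $d$ vertices should be bounded by roughly $2^{d-\delta(D)}$ (the near-extremal shape being a complete bipartite graph oriented from the small side), with strictness unless $D$ is essentially edgeless. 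Plugging $\delta(G[N_G(v)])\ge\delta(G)+\deg v-n$ into this makes $\mathrm{ext}(\vec H,v)$ at most about $2^{\,n-\delta(G)}\le 2^{\lceil n/2\rceil-1}$, which is at most $2^{\lfloor n/2\rfloor}$; combined with the observation that $\delta(G)\ge\lfloor n/2\rfloor+1$ already prevents $G-v$ (for a minimum-degree $v$) from being isomorphic to $K_{\lfloor(n-1)/2\rfloor,\lceil(n-1)/2\rceil}$, so that induction yields $D(G-v,\circ_3)<2^{\lfloor(n-1)^2/4\rfloor}$ strictly, this should give $D(G,\circ_3)<2^{\lfloor n^2/4\rfloor}$. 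A residual extremely dense regime, say $G$ containing a clique on more than about $n/\sqrt2$ vertices, is cleaner to handle separately: in a $\circ_3$-free orientation such a clique is a transitive tournament, so $D(G,\circ_3)\le(\omega(G))!\cdot 2^{e(G)-\binom{\omega(G)}{2}}$, which is already small enough.

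\emph{Main obstacle.} I expect the dense case to be where essentially all the difficulty lies. Local triangle-counting alone loses a constant factor in the exponent (each edge-disjoint triangle only contributes $\log_2(4/3)$ to the exponent, whereas a genuine saving of order $e(G)-\lfloor n^2/4\rfloor$ is needed), so one really has to capture the quasi-transitivity of $\circ_3$-free orientations of dense graphs through a sharp bound on the number of out-closed subsets; this is also where the precise threshold $n\ge 8$, rather than some larger constant, has to be earned, presumably by separating with care the delicate range $\delta(G)=\lfloor n/2\rfloor+1$.
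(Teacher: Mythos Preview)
Your low-minimum-degree case ($\delta(G)\le\lfloor n/2\rfloor$) is clean and correct, and in fact more transparent than anything the paper does in that regime. The gap is squarely in the dense case $\delta(G)\ge\lfloor n/2\rfloor+1$, and it is a real gap rather than an unpolished detail. The key lemma you propose---that the number of out-closed sets in any $\circ_3$-free orientation of a graph $D$ on $d$ vertices is at most roughly $2^{d-\delta(D)}$---is false as stated. For $D=K_d$ a $\circ_3$-free orientation is a transitive tournament with exactly $d+1$ out-closed sets (the terminal segments), whereas your bound gives $2^{d-(d-1)}=2$. If $N_G(v)$ induces $K_{a,b}$ with all edges oriented $A\to B$, the count is $2^a+2^b-1>2^{\max(a,b)}=2^{d-\delta(D)}$. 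If $D$ is a disjoint union of $d/3$ triangles then $\delta(D)=2$ and the count is $4^{d/3}=2^{2d/3}>2^{d-2}$ for $d<6$. So no uniform inequality of the shape $2^{d-\delta(D)}$ is available, and the savings from local density are of a more delicate kind than a single minimum-degree parameter captures. Your fallback for very large cliques, $D(G,\circ_3)\le\omega!\cdot 2^{e(G)-\binom{\omega}{2}}$, is also far too weak: since $e(G)$ may be as large as $\binom{n}{2}$, at $\omega\approx n/\sqrt2$ the exponent alone is already about $n^2/4$ before multiplying by $\omega!$. One further wrinkle: for $n$ odd the claim that $\delta(G)\ge\lfloor n/2\rfloor+1$ prevents $G-v\simeq K_{(n-1)/2,(n-1)/2}$ is not a pure degree statement, as that graph is $\lfloor n/2\rfloor$-regular.

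The paper does not attempt a single-vertex deletion in the dense regime at all. It removes a maximum clique $K$ (capped at size $8$), applies induction to $G-K$, and controls the extensions via a toolbox of tight bounds for $\ext_G(A,B)$ between small cliques (Proposition~\ref{lemma:compatible-vertex-clique}, Lemma~\ref{lemma:compatible-edge-Kr}, Corollary~\ref{cor:compatible-edge-Kr}, Lemmas~\ref{lemma:compatible-K2-K2}--\ref{lemma:compatible-K3-K3}). The argument then branches on $r=\omega(G)$: for $r\ge5$ a straight product bound already wins; $r=4$ uses a maximum matching in $G-K$ together with $\ext_G(e,K)\le13$; $r=3$ decomposes $G$ into vertex-disjoint triangles, a matching, and leftover vertices, and crucially needs the sharp estimate $\ext_G(T_1,T_2)\le15$ between two triangles in a $K_4$-free graph. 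The awkward window $9\le n\le 7+\min\{r,8\}$---precisely where the exact threshold $n\ge8$ is earned---is handled separately as Proposition~\ref{prop:small-cases}, whose proof (Appendix) is a structured case analysis aided by computer verification of a handful of explicit inequalities. Your vertex-by-vertex scheme is more elegant where it works, but the dense case genuinely seems to require this coarser clique-removal machinery rather than a clean closed-set count.
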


\smallskip\noindent
\textbf{Overview of the paper.}
Our proof is divided into two parts. 
Proposition~\ref{prop:small-cases} deals with graphs with at most \(13\) vertices,
and its proof is given in the appendix (Section~\ref{sec:appendix});
and Theorem~\ref{thm:main} deals with general graphs (Section~\ref{sec:main}).
The proofs of these results are somehow similar and consist of an analysis of the size of a maximum clique of the given graph.
In each step, we partition the vertices of a graph~\(G\) into a few parts and, 
using the results presented in Section~\ref{sec:comp},
explore the orientations of the edges between these parts
that lead to $\circ_3$-free orientations of $G$.
Our proof is then reduced to solving a few equations which, in the case of the proof of Proposition~\ref{prop:small-cases}, can be checked by straightforward computer programs. 
In Section~\ref{sec:conc} we present some open problems.
The reader is referred to \cites{Bo78,Di10} for standard terminology on graphs.

\section{Extensions of $\circ_3$-free orientations}
\label{sec:comp}

In this section we provide several bounds on the number of ways one can extend a $\circ_3$-free orientation of a subgraph of a graph $G$ to a $\circ_3$-free orientation of $G$.

Given subgraphs $G_1$ and $G_2$ of $G$, we write $G_1\cup G_2$ for the subgraph of $G$ with vertex set $V(G_1)\cup V(G_2)$ and edge set $E(G_1)\cup E(G_2)$.
Let $\vec G_1$ and $\vec G_2$ be orientations, respectively, of $G_1$ and $G_2$ with the property that any edge of $E(G_1)\cap E(G_2)$ gets the same orientation in $\vec G_1$ and $\vec G_2$.
We denote by $\vec G_1\cup \vec G_2$ the orientation of $G_1\cup G_2$ following the orientations $\vec G_1$ and $\vec G_2$.
 
Let $G$ be a graph and $S\subseteq E(G)$.
For simplicity, we say that an orientation of the subgraph ${G[S]}$ of $G$ induced by the set of edges $S$ is an orientation of $S$.
The next definition is a central concept of this paper.

\begin{definition}[Compatible orientations]
Given a graph $G$, disjoint sets $S$, $T\subseteq E(G)$ and orientations $\vec S$ of $S$ and $\vec T$ of $T$, we say that $\vec S$ and $\vec T$ are \emph{compatible} if $\vec{S}\cup\vec{T}$ is $\circ_3$-free.
\end{definition}

Given a graph $G$ and disjoint sets $A,B\subseteq V(G)$, denote by $E_G(A,B)$ the set of edges of $G$ between $A$ and $B$ and by $G[A,B]$ the spanning subgraph of $G$ induced by $E_G(A,B)$.
It is useful to have an upper bound on number of $\circ_3$-free orientations of $E_G(A,B)$ that are compatible with a fixed orientation of $G[A]\cup G[B]$. This quantity is precisely the maximum number of ways one can \emph{extend} a $\circ_3$-free orientation of $G[A]\cup G[B]$ to a $\circ_3$-free orientation of $G[A\cup B]$.

\begin{definition}\label{def:maxcomp}
Given a graph $G$ and disjoint sets $A$, $B\subseteq V(G)$, let $T=G[A]\cup G[B]$. We define $\ext_G(A,B)$ as follows:
$$
\ext_G(A,B) = \max_{\vec T \in \O{T}} |\{\vec S\in \O{G[A,B]}\colon \text{$\vec S$ and $\vec T$ are compatible}\}|.
$$
\end{definition}
For simplicity, when $A=\{u\}$, we write $\ext_G(u,B)$ instead of $\ext_G(\{u\},B)$.
In the rest of this section we give upper bounds for $\ext_G(A,B)$ for specific graphs $G$ and subgraphs $G[A]$ and $G[B]$. 
If $A$ induces a complete graph with $k$ vertices, then we remark that any $\circ_3$-free orientation $\vec S$ of $G[A]$ is a transitive orientation, which thus induces a unique ordering $(v_1,\ldots, v_k)$ of the vertices of $A$, called \emph{the transitive ordering of $\vec S$}, such that every edge $\{v_i,v_j\}$ ($1\leq i<j\leq k$) is oriented from $v_i$ to $v_j$ in $\vec S$. 

Given a graph $G$, a vertex $v\in V(G)$ and a clique $W\subseteq V(G)\setminus\{v\}$, we denote by $d_G(v,W)$ the number of neighbors of $v$ in $W$.
Consider a $\circ_3$-free orientation $\vec W$ of $G[W]$ and note that if we have a transitive ordering $(w_1,\ldots,w_k)$ of $\vec W$, then there are exactly $d_G(v,W)+1$ ways to extend this ordering to a transitive ordering of $v\cup W$, as it depends only on the position in which we place $v$ in $(w_1,\ldots,w_k)$ with respect to its neighbors in~$W$ (there are $d_G(v,W)+1$ such positions). 
We summarize this discussion in the following proposition.

\begin{proposition}\label{lemma:compatible-vertex-clique}
	Given a graph $G$, $v\in V(G)$ and $W\subseteq V(G)\setminus\{v\}$.
	If $G[W]$ is a complete graph, then $\ext_G(v,W) = d_G(v,W)+1$.
\end{proposition}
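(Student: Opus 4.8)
The plan is to unpack the relevant definitions and reduce the statement to the elementary counting fact about inserting a vertex into a transitive ordering. First I would fix a maximizing orientation $\vec T \in \mathcal{D}(T,\circ_3)$ in Definition~\ref{def:maxcomp}, where here $T = G[\{v\}]\cup G[W] = G[W]$ since $v$ induces no edges; so $\vec T$ is simply a $\circ_3$-free orientation $\vec W$ of the complete graph $G[W]$. As noted in the paragraph preceding the proposition, $\vec W$ being $\circ_3$-free on a clique forces it to be transitive, hence it has a well-defined transitive ordering $(w_1,\ldots,w_k)$, where $k=|W|$.

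Next I would count the orientations $\vec S$ of $G[v,W] = G[\{v\},W]$ that are compatible with $\vec W$. An orientation of $E_G(v,W)$ is a choice, for each of the $d_G(v,W)$ neighbors of $v$ in $W$, of a direction of the corresponding edge. The key claim is that such an $\vec S$ is compatible with $\vec W$ (i.e., $\vec S \cup \vec W$ is $\circ_3$-free) if and only if there is a position $0 \le i \le d_G(v,W)$ such that $v$ sends edges to exactly the first $i$ of its neighbors (in the induced order on $N_G(v)\cap W$) and receives edges from the remaining ones. Indeed, a directed triangle in $\vec S\cup \vec W$ must use $v$ together with two neighbors $w_a, w_b \in W$ with $a<b$; since the edge $w_aw_b$ is oriented $w_a\to w_b$ in the transitive ordering, a directed triangle on $\{v,w_a,w_b\}$ occurs precisely when $v\to w_a$ and $w_b\to v$. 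So $\vec S\cup\vec W$ is $\circ_3$-free exactly when no out-neighbor of $v$ in $W$ follows an in-neighbor of $v$ in $W$ in the ordering $(w_1,\ldots,w_k)$, which is exactly the "threshold" condition above. There are exactly $d_G(v,W)+1$ such choices of threshold $i$, so the count is $d_G(v,W)+1$ regardless of which transitive ordering $\vec W$ induced; hence the maximum over $\vec T$ is also $d_G(v,W)+1$.

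I would also phrase this more slickly, matching the paragraph before the statement: a $\circ_3$-free orientation of $G[v,W]\cup G[W]$ with $G[W]$ complete is exactly a transitive orientation of $G[v\cup W \setminus \{\text{non-edges at }v\}]$ — equivalently, a transitive ordering of $v$ together with its neighbors in $W$ extending the fixed ordering of those neighbors — and there are $d_G(v,W)+1$ ways to insert $v$ into that ordering. Since this number does not depend on $\vec W$, taking the maximum over $\vec T\in\mathcal{D}(T,\circ_3)$ gives $\ext_G(v,W)=d_G(v,W)+1$.

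The only mildly delicate point, and the one I would be most careful about, is the "if and only if" in the compatibility characterization — in particular checking that a triangle created by $\vec S\cup\vec W$ must consist of $v$ and two of its $W$-neighbors (it cannot lie entirely in $W$, since $\vec W$ is $\circ_3$-free, and it must contain $v$ since any triangle not containing $v$ lies in $G[W]$), and that on such a triple the cyclic orientation is the unique obstruction. Everything else is the routine observation that there are $d_G(v,W)+1$ insertion positions and that this is independent of the chosen transitive ordering, so the maximum is attained trivially.
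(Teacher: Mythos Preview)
Your approach is correct and essentially identical to the paper's, which proves this proposition in the discussion paragraph immediately preceding it by the same ``insert $v$ into the transitive ordering'' count. One small slip: in your threshold description the directions are reversed---with the convention that edges go from smaller to larger index, the forbidden configuration $v\to w_a\to w_b\to v$ (with $a<b$) shows that compatible orientations are exactly those where the \emph{in}-neighbors of $v$ precede the \emph{out}-neighbors in the induced order, not the other way around; the count of $d_G(v,W)+1$ is of course unaffected.
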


In the next two results, we give an upper bound for $\ext_G(A,B)$ when $A$ induces a complete graph and $B=\{u,v\}$ is an edge.
We denote by $d_A(x)$ the neighborhood of $x$ in $A$ and \(d_A(x,y)\) denotes the number of common neighbors of \(x\) and \(y\) in \(A\).

\begin{lemma}\label{lemma:compatible-edge-Kr}
	Let $r\geq 3$ be an integer and let $G$ be a graph.
	If $A,B\subseteq V(G)$ induce disjoint cliques with $|A|=r$ and $B=\{u,v\}$ such that $d_A(x,y)\neq 0$, then
	\[
		\ext_G(A,B)\leq (d_{A}(u)+1)(d_{A}(v)+1)-\dbinom{d_{A}(u,v)+1}{2}.
	\]
\end{lemma}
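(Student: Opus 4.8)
The plan is to fix an arbitrary $\circ_3$-free orientation $\vec T$ of $T = G[A]\cup G[B]$ and bound the number of $\circ_3$-free orientations $\vec S$ of $G[A,B]$ compatible with it. Since $A$ is a clique, $\vec{T}$ restricted to $G[A]$ is transitive and gives a transitive ordering $(a_1,\dots,a_r)$ of $A$; moreover $\vec T$ orients the edge $uv$, say from $u$ to $v$. The edges of $G[A,B]$ split into those incident to $u$ (there are $d_A(u)$ of them) and those incident to $v$ (there are $d_A(v)$ of them). By Proposition~\ref{lemma:compatible-vertex-clique}, the orientations of the $u$-edges that are compatible with $\vec T$ (ignoring $v$) correspond to the $d_A(u)+1$ ways of inserting $u$ into the transitive ordering of $A$; likewise there are $d_A(v)+1$ admissible ``slot'' choices for $v$. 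So without any further constraint there would be at most $(d_A(u)+1)(d_A(v)+1)$ compatible orientations, and the content of the lemma is to subtract off the pairs of slot-choices that are nonetheless \emph{incompatible} — that is, those that create a copy of $\circ_3$ through the edge $uv$ and some common neighbor $w\in A$ of $u$ and $v$.

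The heart of the argument is therefore a local analysis at a fixed common neighbor $w$. Orient $uv$ as $u\to v$; the three edges $uw, vw, uv$ form a $\circ_3$ precisely when they are cyclically oriented, i.e. when $v\to w$ and $w\to u$. Now I would translate the slot-choices of $u$ and $v$ into orientations of $uw$ and $vw$: placing $u$ before $w$ in the ordering forces $u\to w$ (equivalently $w\to u$ is forbidden), and placing $u$ after $w$ forces $w\to u$; similarly for $v$. The cyclic pattern $v\to w\to u$ thus occurs exactly when $u$ is placed after $w$ and $v$ is placed before $w$. Counting slot-choices: if $u$ has $k$ neighbors in $A$ among $a_1,\dots,a_r$, the slot ``after $w$'' condition rules out the slots before the first edge up to $w$'s position, and one finds that over the $(d_A(u)+1)(d_A(v)+1)$ total pairs of slots, the number producing a bad triangle at a \emph{specific} common neighbor is governed by how the slots of $u$ and $v$ straddle $w$. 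Summing the ``bad'' contributions over all $d_A(u,v)$ common neighbors and carefully accounting for overcounting (a single pair of slots can be bad at several common neighbors) is where the binomial coefficient $\binom{d_A(u,v)+1}{2}$ emerges: the common neighbors of $u$ and $v$ occupy $d_A(u,v)$ positions in the ordering, these positions cut the line into $d_A(u,v)+1$ intervals, and a pair (slot of $u$, slot of $v$) fails to be compatible iff the $v$-slot lies in a strictly earlier interval than the $u$-slot — there are exactly $\binom{d_A(u,v)+1}{2}$ such ordered pairs of intervals. Subtracting these from $(d_A(u)+1)(d_A(v)+1)$ yields the claimed bound, and since $\vec T$ was arbitrary the maximum over $\vec T$ obeys the same inequality.

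The main obstacle I anticipate is the bookkeeping in the last step: one must verify that the map sending a compatible pair of slots to the pair of intervals (relative to the common neighbors) is well-defined and that exactly $\binom{d_A(u,v)+1}{2}$ slot-pairs are killed, no more and no less — i.e. that no compatible orientation is accidentally excluded and no incompatible one survives the count. The subtlety is that the non-common neighbors of $u$ (those adjacent to $u$ but not $v$) interleave with the common ones and enlarge the set of available slots for $u$, so the correspondence is not literally ``choose an interval'' but ``choose a slot, then read off which interval it falls in'', and different slots in the same interval behave identically as far as triangles are concerned. Making this reduction precise — essentially showing the bad event depends only on the coarser interval data — is the one place where care is needed; everything else is an application of Proposition~\ref{lemma:compatible-vertex-clique} and elementary counting.
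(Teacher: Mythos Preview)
Your approach is essentially the paper's: fix the transitive ordering of $A$ and the orientation $u\to v$, count the $(d_A(u)+1)(d_A(v)+1)$ slot-pairs via Proposition~\ref{lemma:compatible-vertex-clique}, and discard those where some common neighbor $w$ sits between $v$ (before) and $u$ (after), creating the cycle $u\to v\to w\to u$.

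One small correction: you write that \emph{exactly} $\binom{d_A(u,v)+1}{2}$ slot-pairs are killed, but this is generally false. Each of the $d_A(u,v)+1$ intervals determined by the common neighbors contains at least one $u$-slot and at least one $v$-slot (since the common neighbors are among the neighbors of each), but it may contain more whenever $u$ or $v$ has non-common neighbors in $A$. Hence the number of bad slot-pairs is $\sum_{i<j} n^v_i\, n^u_j \geq \binom{d_A(u,v)+1}{2}$, with equality only when all neighbors are common. Fortunately you only need the lower bound ``at least $\binom{d_A(u,v)+1}{2}$ bad pairs'' to get the stated upper bound on compatible orientations, so the argument goes through; the paper's proof is equally terse on this point.
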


\begin{proof}
Let $\vec A$ and $\vec{B}$ be arbitrary $\circ_3$-free orientations of $G[A]$ and $G[B]$ respectively.
Suppose without loss of generality that $\vec B$ assigns the orientation of $\{u,v\}$ from $u$ to~$v$ and consider the transitive ordering of $\vec A$.
We estimate in how many ways one can include $u$ and $v$ in the ordering $(v_1,\ldots, v_r)$ while keeping it transitive.
Since $\{u\}\cup N_A(u)$ and $\{v\}\cup N_A(v)$ are cliques, by Proposition~\ref{lemma:compatible-vertex-clique} we have $\ext_G(u,A)\leq d_{A}(u)+1$ and $\ext_G(v,A)\leq d_{A}(v)+1$, which gives at most $(d_A(u)+1)(d_A(v)+1)$ positions to put the vertices $u$ and $v$ in the transitive ordering of~$\vec A$.
Note that there are $\binom{d_{A}(u,v)+1}{2}$ ways to place $\{u,v\}$ in the transitive ordering of $\vec A$ such that $u$ appears after $v$ and they have a common neighbor between them.
But each such ordering induces a $\circ_3$. This finishes the proof.
\end{proof}

The following corollary bounds the number of extensions $\ext_G(A,B)$ when $A$ is a maximum clique of \(G\) and $B=\{u,v\}$ is an edge.

\begin{corollary}\label{cor:compatible-edge-Kr}
	Let $r\geq 2$ be an integer and let $G$ be a $K_{r+1}$-free graph.
	If $A,B\subseteq V(G)$ are disjoint cliques with $|A|=r$ and $B=\{x,y\}$, then
	$$
	\ext_G(A,B) \leq r^2 - {r-1\choose 2}.
	$$
\end{corollary}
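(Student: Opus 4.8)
The plan is to derive this from Lemma~\ref{lemma:compatible-edge-Kr} by a case split on whether $x$ and $y$ have a common neighbor in $A$. First suppose $d_A(x,y)\neq 0$. Since $G$ is $K_{r+1}$-free and $A$ is a clique of size $r$, no vertex outside $A$ can be adjacent to all of $A$; in particular $d_A(x)\leq r-1$ and $d_A(y)\leq r-1$. Applying Lemma~\ref{lemma:compatible-edge-Kr} with this clique $A$ (of size $r$) and the edge $B=\{x,y\}$, we get
\[
\ext_G(A,B)\leq (d_A(x)+1)(d_A(y)+1)-\binom{d_A(x,y)+1}{2}\leq r\cdot r-\binom{1+1}{2}=r^2-1,
\]
using $d_A(x,y)\geq 1$ for the binomial term and monotonicity of $(d_A(x)+1)(d_A(y)+1)$ in each factor. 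Since $r\geq 2$ we have $\binom{r-1}{2}\leq 1$ exactly when $r\leq 3$, so this crude bound already suffices only in that range; I will need to be a little more careful to get $r^2-\binom{r-1}{2}$ in general.

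The sharper observation is that one cannot have both $d_A(x)=r-1$ and $d_A(y)=r-1$ while $d_A(x,y)$ is small: if $|N_A(x)|=|N_A(y)|=r-1$ then $|N_A(x)\cap N_A(y)|\geq (r-1)+(r-1)-r=r-2$, so $d_A(x,y)\geq r-2$ and the subtracted term is at least $\binom{r-1}{2}$, giving $\ext_G(A,B)\leq r^2-\binom{r-1}{2}$ directly. More generally, write $a=d_A(x)$, $b=d_A(y)$; then $d_A(x,y)\geq a+b-r$, so by Lemma~\ref{lemma:compatible-edge-Kr},
\[
\ext_G(A,B)\leq (a+1)(b+1)-\binom{\max(a+b-r,0)+1}{2}\qquad\text{with }0\leq a,b\leq r-1.
\]
It then remains to check that the right-hand side is maximized, over integers $a,b$ in this range, by the value $r^2-\binom{r-1}{2}$. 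This is an elementary optimization: when $a+b\leq r$ the bound is at most $(a+1)(b+1)\leq$ (by AM–GM on the constrained sum) something comfortably below $r^2-\binom{r-1}{2}$ for $r\geq 2$, and when $a+b>r$ one substitutes $d_A(x,y)\geq a+b-r$ and checks the resulting quadratic in $a,b$ is maximized at $a=b=r-1$.

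Finally the case $d_A(x,y)=0$: here Lemma~\ref{lemma:compatible-edge-Kr} does not apply (it assumes $d_A(x,y)\neq 0$), but the bound is easier. By Proposition~\ref{lemma:compatible-vertex-clique}, $\ext_G(x,A)\leq d_A(x)+1$ and $\ext_G(y,A)\leq d_A(y)+1$, and since the extensions of $x$ and $y$ into the transitive ordering of $\vec A$ can be chosen independently, $\ext_G(A,B)\leq (d_A(x)+1)(d_A(y)+1)\leq r^2$ using $d_A(x),d_A(y)\leq r-1$ from $K_{r+1}$-freeness — wait, this gives only $r^2$, not $r^2-\binom{r-1}{2}$, so I must use that $x,y$ having no common neighbor forces $d_A(x)+d_A(y)\leq r$, hence $(d_A(x)+1)(d_A(y)+1)\leq \big(\tfrac{r}{2}+1\big)^2\leq r^2-\binom{r-1}{2}$ for $r\geq 2$. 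I expect the main obstacle to be nothing deep but purely bookkeeping: making the small-$r$ cases ($r=2,3$) and the boundary cases of the optimization consistent with the single clean formula $r^2-\binom{r-1}{2}$, since several of the crude estimates above are only tight in part of the range.
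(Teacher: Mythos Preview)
Your proposal is correct and follows essentially the same approach as the paper: both split into the case where the product bound $(d_A(x)+1)(d_A(y)+1)$ alone suffices (your case $d_A(x,y)=0$, the paper's case $d_A(x)+d_A(y)\leq r$) and the case where Lemma~\ref{lemma:compatible-edge-Kr} is invoked together with the inclusion--exclusion bound $d_A(x,y)\geq d_A(x)+d_A(y)-r$; both then reduce the resulting two-variable optimization to a one-variable quadratic via AM--GM and evaluate at the endpoint $d_A(x)=d_A(y)=r-1$. The paper carries out the final quadratic check explicitly (showing the expression in $d=d_A(x)+d_A(y)$ is increasing on $(-\infty,2r+1)$), whereas you leave it as a sketch, but the argument is the same.
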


\begin{proof}
	Let \(d_x = d_G(x,A)\) and \(d_y = d_G(y,A)\), and put $d=d_x+d_y$. 
	If $d\leq r$, then by applying Proposition~\ref{lemma:compatible-vertex-clique} twice, with $x$ and $y$, we have
	\begin{equation*}
		\ext_G(A,B) \leq (d_x+1)(d_y+1) \leq \dfrac{d^2}{4} + d+1 \leq \frac{r^2}{4}+r+1 \leq r^2 - {r-1\choose 2}.
	\end{equation*}
	Therefore, we assume that $d>r$.
	Note that since $G$ is $K_{r+1}$-free, we have $d_x,d_y\leq r-1$.
	Applying Lemma~\ref{lemma:compatible-edge-Kr} and using the fact that, for $d>r$, we have $d_A(x,y) \geq  d-r$, we obtain
	\begin{equation}\label{eq:qualquer}
		\ext_G(A,B)\leq (d_x+1)(d_y+1) - {d-r+1\choose 2} \leq	\dfrac{d^2}{4}+d+1-\dbinom{d-r+1}{2}.
	\end{equation}
One can check that the right-hand side of~\eqref{eq:qualquer} is a polynomial on $d$ of degree $2$ with negative leading coefficient and it is a growing function in the interval $(-\infty,2r+1)$. Since  $d\leq 2(r-1)$, we have 
	\[\ext_G(A,B)\leq (r-1)^2 + 2(r-1)+1 -\dbinom{r-1}{2} = r^2-\dbinom{r-1}{2}.\]
\end{proof}

Given a graph \(G\), an edge \(e\), and an orientation \(\vec{S}\) of \(E(G)\setminus\{e\}\),
we say that the orientation of \(e\) is \emph{forced} if there is only one orientation 
of \(e\) compatible with \(\vec{S}\).
In the next two lemmas we provide bounds for the number of $\circ_3$-free orientations of $K_4$-free graphs.

\begin{lemma}\label{lemma:compatible-K2-K2}
	Let $G$ be a $K_4$-free graph and let $A$, $B\subseteq V(G)$ be disjoint cliques of size~$2$.
	Then $\ext_G(A,B) \leq 5$.
\end{lemma}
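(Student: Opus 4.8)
The plan is to first use the $K_4$-freeness of $G$ to severely restrict the bipartite graph $G[A,B]$, and then to do a short case analysis. Write $A=\{a_1,a_2\}$ and $B=\{b_1,b_2\}$; since $A$ and $B$ are cliques, $a_1a_2$ and $b_1b_2$ are edges of $G$, so the four vertices $a_1,a_2,b_1,b_2$ already span $2$ edges plus whatever lies between $A$ and $B$. If all four possible edges of $E_G(A,B)$ were present we would obtain a copy of $K_4$; hence $|E_G(A,B)|\le 3$. Note also that $T=G[A]\cup G[B]$ consists of two disjoint edges, so every one of its (at most four) orientations is $\circ_3$-free, and $\ext_G(A,B)$ is a genuine maximum over orientations $\vec T$ of $T$.

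When $|E_G(A,B)|\le 2$ there is nothing to do: $G[A,B]$ has at most $2^2=4$ orientations, so $\ext_G(A,B)\le 4$. The only real case is $|E_G(A,B)|=3$. Relabelling vertices inside $A$ and inside $B$, I may assume the missing edge is $a_2b_2$, so $E_G(A,B)=\{a_1b_1,a_1b_2,a_2b_1\}$ and $G[A\cup B]$ has exactly two triangles, $Q_1=\{a_1,a_2,b_1\}$ and $Q_2=\{a_1,b_1,b_2\}$, sharing the edge $a_1b_1$. I fix an arbitrary orientation $\vec T$ of $T$ and bound the number of orientations of $G[A,B]$ compatible with it; an orientation is incompatible exactly when $Q_1$ or $Q_2$ becomes cyclic in $\vec S\cup\vec T$.

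Let $\mathcal{C}_i$ ($i=1,2$) be the set of orientations of $E_G(A,B)$ for which $Q_i$ is cyclic in $\vec S\cup\vec T$. The key observation is that each $Q_i$ already has one of its three edges oriented by $\vec T$ (namely $a_1a_2$ for $Q_1$ and $b_1b_2$ for $Q_2$); making the triangle cyclic then determines the orientations of its other two edges, while the third edge of $G[A,B]$ is free. Hence $|\mathcal{C}_1|=|\mathcal{C}_2|=2$. Moreover an orientation in $\mathcal{C}_1\cap\mathcal{C}_2$ has all three edges of $G[A,B]$ pinned down, so $|\mathcal{C}_1\cap\mathcal{C}_2|\le 1$. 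By inclusion--exclusion the number of incompatible orientations is $|\mathcal{C}_1\cup\mathcal{C}_2|\ge 2+2-1=3$, and since $G[A,B]$ has $2^3=8$ orientations in all, at most $5$ are compatible with $\vec T$. As $\vec T$ was arbitrary, $\ext_G(A,B)\le 5$.

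This argument is essentially pure bookkeeping, so there is no serious obstacle; the one point that needs care is the reduction step, where one must note that $A$ and $B$ being \emph{cliques} (not merely sets) is exactly what forces $|E_G(A,B)|\le 3$, and that in the main case precisely the two triangles $Q_1,Q_2$ occur. It is worth remarking that the bound $5$ is attained — when $\vec T$ orients $a_1a_2$ and $b_1b_2$ so that the two ``forced cyclic'' configurations agree on the shared edge $a_1b_1$, one gets $|\mathcal{C}_1\cap\mathcal{C}_2|=1$ and exactly $5$ compatible orientations — so no slicker estimate can do better here. (One could alternatively run the argument through Proposition~\ref{lemma:compatible-vertex-clique} by inserting $a_1$ and then $a_2$ into the transitive order of the relevant clique, but the direct count above is cleaner and uniform in $\vec T$.)
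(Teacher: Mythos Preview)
Your proof is correct. Both you and the paper make the same opening reduction: $K_4$-freeness forces $|E_G(A,B)|\le 3$, the cases $|E_G(A,B)|\le 2$ are trivial, and the only work is the $K_4^-$ configuration with missing edge $a_2b_2$. The difference is in how the count is finished. The paper fixes $\vec T$ and splits into two cases according to whether the arrows on $a_1a_2$ and $b_1b_2$ point ``the same way'' or ``opposite ways'', obtaining $\ext_G(A,B)\le 4$ in one case and $\le 5$ in the other by tracking which of the remaining edges become forced. Your argument instead counts \emph{bad} orientations via inclusion--exclusion on the two triangles $Q_1,Q_2$: each contributes exactly two cyclic completions, they overlap in at most one, so at least three of the eight orientations are excluded. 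This avoids the case split on $\vec T$ entirely and is uniform; the paper's case analysis, on the other hand, gives slightly more information (it identifies which $\vec T$ actually achieve the value $5$), which you recover separately in your closing remark about tightness.
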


\begin{proof}
	First, note that if $e_G(A,B)\leq 2$, then the trivial bound $\ext_G(A,B) \leq 2^{e(A,B)}$ 
	implies $\ext_G(A,B) \leq 4$.
	Also, since $G$ is $K_4$-free, we have $e_G(A,B)\leq 3$.
	Thus, we may assume that $e_G(A,B)= 3$, i.e., $G[A\cup B]$ is a $K_4^-$.
	Let $A=\{u_1,u_2\}$ and $B=\{v_1,v_2\}$ so that $u_2v_2$ is not an edge and 	consider an arbitrary orientation of the edges $\{u_1,u_2\}$ and $\{v_1,v_2\}$.

	If the oriented edges are $u_1u_2$ and $v_1v_2$ (or, by symmetry, $u_2u_1$ and $v_2v_1$), then for the two possible orientations of $\{u_1,v_1\}$, the orientation of one of the two remaining edges in $E_G(A,B)$ is forced. 
Thus, since there is only one edge left to orient in $E_G(A,B)$, which can be done in two ways, we have $\ext_G(A,B)\leq 4$.
	
	It remains to consider the case where the oriented edges are $u_1u_2$ and $v_2v_1$ (or, by symmetry, $u_2u_1$ and $v_1v_2$).
	If $\{u_1v_1\}$ is oriented from $u_1$ to $v_1$, then the orientation of the two remaining edges in $E_G(A,B)$ are forced, which gives us one $\circ_3$-free orientation.
	On the other hand, if $\{u_1v_1\}$ is oriented from $v_1$ to $u_1$, then one can orient the both remaining edges in $E(A,B)$ in two ways, which in total gives that $\ext_G(A,B)\leq 5$.
\end{proof}

\begin{lemma}\label{lemma:compatible-vertex-K4-}
	Let $G$ be a $K_4$-free graph and let $u\in V(G)$ and $B\subseteq V(G)$ with $|B|=4$.
	If $G[B]$ induces a copy of $K_4^-$, then $\ext_G(u,B) \leq 5$.
\end{lemma}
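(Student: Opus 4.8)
The plan is a short case analysis on $t:=d_G(u,B)$, the number of neighbours of $u$ in $B$, exploiting that $u$ cannot be adjacent to all three vertices of a triangle of $G[B]$. Label $B=\{v_1,v_2,v_3,v_4\}$ so that $v_1v_4$ is the unique non-edge of $G[B]$; then the triangles contained in $B$ are exactly $v_1v_2v_3$ and $v_2v_3v_4$. If $t\le 2$, the trivial bound $\ext_G(u,B)\le 2^{e_G(\{u\},B)}=2^{t}\le 4$ already suffices. If $t=4$, then $u$ is adjacent to $v_1,v_2,v_3$ and, since $v_1v_2v_3$ is a triangle, $\{u,v_1,v_2,v_3\}$ spans a $K_4$, contradicting the hypothesis. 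So the real work is the case $t=3$.

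For $t=3$, the set $N_B(u)$ cannot be $\{v_1,v_2,v_3\}$ or $\{v_2,v_3,v_4\}$ (same $K_4$ obstruction), so up to swapping $v_2$ and $v_3$ we may assume $N_B(u)=\{v_1,v_2,v_4\}$; hence $E_G(\{u\},B)=\{uv_1,uv_2,uv_4\}$. Among the triangles of $G[\{u\}\cup B]$, the only ones through $u$ are $uv_1v_2$ and $uv_2v_4$ (the pair $\{v_1,v_4\}$ is a non-edge), they share the edge $uv_2$, and the rest lie inside $B$, hence are already transitively oriented by the fixed $\circ_3$-free orientation $\vec T$ of $T=G[\{u\}]\cup G[B]$ and impose no condition on the edges at $u$. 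Thus it remains to bound, for fixed orientations of $v_1v_2$ and $v_2v_4$ (coming from $\vec T$), the number of $\circ_3$-free orientations of $\{uv_1,uv_2,uv_4\}$.

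I would orient $uv_2$ first. With $uv_2$ fixed, the edge $uv_1$ is constrained only by the triangle $uv_1v_2$: it has $2$ valid orientations if $v_1v_2$ and $uv_2$ point the same way at $v_2$ (both towards $v_2$ or both away from it), and only $1$ otherwise --- call this number $n_1\in\{1,2\}$ --- and analogously $uv_4$ has $n_4\in\{1,2\}$ valid orientations coming from the triangle $uv_2v_4$; since $uv_1$ and $uv_4$ are not jointly constrained ($\{v_1,v_4\}$ is a non-edge), the number of admissible orientations with this choice of $uv_2$ is exactly $n_1n_4$. The crux is that reversing $uv_2$ flips whether it agrees with $v_1v_2$ at $v_2$, so $n_1=2$ for precisely one of the two orientations of $uv_2$, and likewise $n_4=2$ for precisely one; summing over the two orientations of $uv_2$ gives either $2\cdot 2+1\cdot 1=5$ or $2\cdot 1+1\cdot 2=4$, whence $\ext_G(u,B)\le 5$. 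The only delicate point is this final ``free versus forced'' bookkeeping around the shared edge $uv_2$; the rest is a routine check triangle by triangle, and I foresee no real obstacle.
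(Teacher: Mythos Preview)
Your proof is correct and follows essentially the same route as the paper: reduce to $d_G(u,B)=3$, observe that the three neighbours of $u$ in $B$ span an induced path (you write it as $v_1v_2v_4$, the paper as $v_1v_2v_3$), and then do a short case split on one of the edges at $u$. The only cosmetic difference is which edge you branch on: the paper fixes the orientation of the edge from $u$ to an endpoint of the path and then invokes Proposition~\ref{lemma:compatible-vertex-clique} for one subcase, whereas you fix the middle edge $uv_2$ and count directly via the ``agree/disagree at $v_2$'' observation, which makes the argument self-contained.
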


\begin{proof}
	Consider an arbitrary orientation of the edges of $G[B]$.
	We may assume that $d_B(u)\geq 3$, as otherwise we have $\ext_G(u,B)\leq 4$.
	Since $G$ is $K_4$-free and $G[B]$ induces a copy of $K_4^{-}$, the vertex $u$ must have exactly three neighbors in $B$, which span an induced path $v_1v_2v_3$.
	By symmetry, we assume that $\{v_1,v_2\}$ is oriented from $v_1$ to $v_2$. 
	If we orient $u v_1$ from $u$ to $v_1$, then the orientation of $\{u,v_2\}$ is forced, which leaves two possible orientations for the edge $\{u,v_3\}$.
	On the other hand, if we orient $u v_1$ from $u$ to $v_1$, we just apply Proposition~\ref{lemma:compatible-vertex-clique} to conclude that $\ext_G(u, \{v_2,v_3\})\leq 3$. 
	Combining the possible orientations, we obtain $\ext_G(u,B)\leq 5$.	
\end{proof}

We now provide an upper bound for $\ext_G(A,B)$ (see Lemma~\ref{lemma:compatible-K3-K3} below) in a specific configuration of a $K_4$-free graph $G$,  and subsets of vertices $A$ and $B$, which is proved using the following proposition.

\begin{proposition}\label{lemma:compatible-P5-square}
	Let $P$ be a path $abcde$,
	and let $T = \{ac,bd,ce\}$.
	Given an orientation $\vec T$ of $T$,
	there are at most eight orientations of $E(P)$ compatible with $\vec T$.
	Moreover, if the edges $\{a,c\}$ and $\{b,d\}$ are oriented, respectively, towards $a$ and $d$,
	then there are at most~$7$ such orientations.
\end{proposition}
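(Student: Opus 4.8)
We have a path $P = abcde$ with five vertices and four edges $\{a,b\},\{b,c\},\{c,d\},\{d,e\}$, together with three "chord" edges forming $T=\{ac,bd,ce\}$, which we think of as already oriented by the given $\vec T$. We must bound the number of orientations of $E(P)$ that, together with $\vec T$, produce no copy of $\circ_3$. The plan is to expose the three triangles living in $P\cup G[T]$: the vertex triples $\{a,b,c\}$, $\{b,c,d\}$, and $\{c,d,e\}$ each span a triangle (using edges $ab,bc,ca$; then $bc,cd,db$; then $cd,de,ec$), and each of these triangles must be transitively—not cyclically—oriented.

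First I would reduce to a small case analysis driven by the central edge $\{b,c\}$ and $\{c,d\}$. Fix the given orientation $\vec T$ of the chords. For each of the two orientations of $\{b,c\}$, the triangle on $\{a,b,c\}$ (whose chord $ac$ is fixed) forces at most one further constraint, and similarly the triangle on $\{c,d,e\}$ (whose chord $ce$ is fixed) is governed by $\{c,d\}$; the middle triangle $\{b,c,d\}$ ties $\{b,c\}$, $\{c,d\}$ together through the fixed chord $bd$. Concretely, I would argue as in the proof of Lemma~\ref{lemma:compatible-K2-K2}: whenever a triangle has two of its edges oriented and is still extendable to a transitive orientation, the third edge is either \emph{forced} or, in the unique bad configuration, free to go either way. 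Running through the (at most) $2\times 2$ choices of $(\{b,c\},\{c,d\})$ and then counting, for each, how many of the two "leaf" edges $\{a,b\}$ and $\{d,e\}$ remain free after the triangle constraints propagate, yields the bound of~$8$: in the generic branch both leaves are free (contributing $4$) but the orientations that would leave all four $P$-edges free are ruled out because then the middle triangle $\{b,c,d\}$ closes up into a $\circ_3$ for some choice, trimming the count down to~$8$ overall.

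For the "Moreover" part, I would specialise to $\vec T$ with $ac$ oriented towards $a$ and $bd$ oriented towards $d$. Now the triangle $\{a,b,c\}$ has chord $c\to a$, and the triangle $\{b,c,d\}$ has chord $b\to d$. I would examine the two orientations of $\{b,c\}$. In one orientation, the chord-plus-$\{b,c\}$ data already forces one of $\{a,b\}$ (from the $abc$-triangle) and one of $\{c,d\}$ (from the $bcd$-triangle), so the $cde$-triangle then has $\{c,d\}$ fixed and either forces $\{d,e\}$ or leaves it free, giving at most $2$; in the other orientation of $\{b,c\}$ the middle triangle becomes cyclic outright unless $\{c,d\}$ is chosen the unique compatible way, and then propagating through both outer triangles leaves at most $1+4$—but the total across the two branches is at most $7$, where the saving of exactly one compared to the general bound comes from the fact that the specific chord orientations $c\to a$ and $b\to d$ are "aligned" along the path, killing one of the otherwise-available free configurations.

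The main obstacle I anticipate is purely bookkeeping: making sure the case split on $(\{b,c\},\{c,d\})$ is organised so that no $\circ_3$-creating orientation is silently double-counted or omitted, and verifying that the one forbidden "all-four-leaves-and-chords-consistent" pattern is correctly identified as the source of the drop from $16$ (the trivial $2^4$) down to $8$, and then from $8$ to $7$ under the extra hypothesis. This is exactly the kind of finite check that the paper elsewhere offloads to a computer, but here it is small enough to do by hand along the lines of Lemmas~\ref{lemma:compatible-K2-K2} and~\ref{lemma:compatible-vertex-K4-}; no new idea beyond "each triangle is transitive, so a partially-oriented triangle forces its last edge unless it is in the unique near-cyclic state" should be needed.
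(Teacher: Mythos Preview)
Your strategy---identify the three triangles on $\{a,b,c\}$, $\{b,c,d\}$, $\{c,d,e\}$ and propagate the transitivity constraints---is exactly the paper's approach, but your bookkeeping is muddled and, as written, does not establish either bound.

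For the first part, you describe the reduction from $2^4=16$ to $8$ as coming from a single ``all-four-free'' bad pattern. That is not the mechanism. The paper's count is cleaner: with the chord $ac$ fixed, the triangle $abc$ admits exactly three orientations of $(ab,bc)$; likewise the triangle $cde$ admits three orientations of $(cd,de)$. This already gives $3\times 3=9$, not $16$. Then one observes that among these nine, at least one pair $(bc,cd)$ forms a directed triangle with the fixed chord $bd$, and this rules out at least one of the nine combinations, leaving at most eight. Your $2\times 2$ split on $(bc,cd)$ can be made to work, but you would need to note that the four cells contribute $1,2,2,4$ (in some order) to the count of $(ab,de)$, summing to $9$, and that the cell killed by the middle triangle contributes at least $1$---not that there is a single global bad orientation.

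For the ``moreover'' part, your sketch (``at most $1+4$'', ``killing one of the otherwise-available free configurations'') does not actually pin down the count. The paper splits on the orientation of $bc$ alone: if $b\to c$ then (since $c\to a$) the edge $ab$ is forced, and the three orientations of $(cd,de)$ from the $cde$-triangle remain, giving $3$; if $c\to b$ then (since $b\to d$) the edge $cd$ is forced, and at most $2\times 2=4$ orientations of $(ab,de)$ remain, giving $4$. Total $3+4=7$. Your branches need to be organised this precisely to get the claimed bound.
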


\begin{proof}
	By Proposition~\ref{lemma:compatible-vertex-clique}, there are three orientations of $T_1 = G[\{a,b,c\}]$ (resp. $T_2=G[\{c,d,e\}]$) compatible with $\vec T$,
	and hence there are at most nine orientations of $E(P)$ compatible with $\vec T$.
	In these orientations, each direction of $\{b,c\}$ and $\{c,d\}$ appears at least once.
	If $\{b,d\}$ is oriented towards $d$ (resp. towards $b$),
	then the orientations in which $\{b,c\}$ and $\{c,d\}$ are oriented, respectively, towards $b$ and $c$ (resp. $c$ and $d$) are not compatible with $\vec T$.
	Therefore, there are at most eight orientations of $E(P)$ compatible with $\vec T$.
	Now, suppose that $\{a,c\}$ and $\{b,d\}$ are oriented towards $a$ and $d$.
	If we orient $\{b,c\}$ towards $c$ (resp. $b$), then $\{a,b\}$ must be oriented towards $a$ (resp. $\{c,d\}$ must be oriented towards $d$),
	and there are three orientations of $E(T_2)$ (resp. four orientations of $\{\{a,b\},\{d,e\}\}$) from which we can complete a compatible orientation of $E(P)$.
	Therefore, there are at most seven orientations of \(E(P)\).
\end{proof}

\begin{lemma}\label{lemma:compatible-K3-K3}
	Let $G$ be a $K_4$-free graph and let $A$, $B\subseteq V(G)$ be disjoint cliques of size~$3$.
	Then $\ext_G(A,B) \leq 15$.
\end{lemma}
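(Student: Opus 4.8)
The plan is to study the bipartite graph $H=G[A,B]$ of edges of $G$ between $A$ and $B$ and to reduce the count of extensions to the components of $H$. Fix a $\circ_3$-free --- hence transitive on each triangle --- orientation $\vec T$ of $G[A]\cup G[B]$. Since $G$ is $K_4$-free and $A$, $B$ are triangles, no vertex of $A$ can have three neighbours in $B$ (it would span a $K_4$ with them, as $B$ is a clique), and symmetrically; hence $H$ has maximum degree at most $2$ on each side, so $e(H)\le 6$ and every component of $H$ is a path or an even cycle. Apart from $A$ and $B$ themselves, which $\vec T$ already handles, any triangle of $G[A\cup B]$ has one vertex $w$ in one of $A,B$ and two in the other, and is a triangle precisely when both $H$-neighbours of $w$ belong to it; it then uses exactly two $H$-edges at $w$, that is, two edges of a single component of $H$. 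Consequently, for every orientation $\vec S$ of $E_G(A,B)$, the orientation $\vec T\cup\vec S$ is $\circ_3$-free if and only if its restriction to each component of $H$ is. Since the edge sets of distinct components are disjoint, this gives
\[
\ext_G(A,B)\ \le\ \prod_{C\text{ component of }H} m(C),
\]
where $m(C)$ denotes the maximum over $\vec T$ of the number of orientations of $E(C)$ that extend $\vec T$ to a $\circ_3$-free orientation.

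I would then identify the possible components and bound $m(C)$. A $C_4$-component between $\{a,a'\}\subseteq A$ and $\{b,b'\}\subseteq B$ would, together with $aa'\in G[A]$ and $bb'\in G[B]$, be a $K_4$; so $H$ has no $C_4$-component, and since $C_6$ spans all six vertices, either $H=C_6$ or $H$ is a disjoint union of paths, in which case $e(H)=6-(\text{number of components})\le 5$, with $e(H)=5$ only if $H=P_6$. For a path-component $P$ with edges $e_1,\dots,e_m$ listed along $P$, the mixed triangles of $P$ are exactly those on consecutive pairs $\{e_i,e_{i+1}\}$: the third edge of such a triangle lies in $A$ or in $B$, hence is oriented by $\vec T$, and a triangle with one edge already oriented forbids exactly one of the four joint orientations of its remaining two edges. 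So $m(P)\le\max_{M_1,\dots,M_{m-1}}\mathbf 1^{\top}M_1M_2\cdots M_{m-1}\mathbf 1$, the maximum over $2\times 2$ matrices $M_i$ of zeros and ones with a single zero; evaluating this yields $m(P_1)=1$, $m(P_2)=2$, $m(P_3)\le 3$ (also immediate from Proposition~\ref{lemma:compatible-vertex-clique}), $m(P_4)\le 5$ (also from Lemma~\ref{lemma:compatible-K2-K2}, applied to the size-$2$ cliques inside $A$ and inside $B$), $m(P_5)\le 8$ (also from Proposition~\ref{lemma:compatible-P5-square}, whose $T=\{ac,bd,ce\}$ is precisely the triple of third edges here), and $m(P_6)\le 13$ (the extremal choice has all four matrices forbidding the same pattern, and counts the $13$ binary strings of length $5$ with no two consecutive $+$'s).

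Combining these with the trivial bound $\ext_G(A,B)\le 2^{e(H)}\le 8$ when $e(H)\le 3$, we obtain $\ext_G(A,B)\le\max\{8,\,m(P_5),\,m(P_4)m(P_2),\,m(P_3)^2,\,m(P_6)\}=13$ whenever $H$ is a disjoint union of paths. The remaining --- and decisive --- case is $H=C_6$, say $v_1v_2\cdots v_6v_1$ with $A=\{v_1,v_3,v_5\}$ and $B=\{v_2,v_4,v_6\}$. Encode an orientation of $E(C_6)$ by $\sigma\in\{+,-\}^6$ (indices mod $6$), $\sigma_i=+$ iff $v_iv_{i+1}$ is oriented from $v_i$ to $v_{i+1}$. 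Exactly as for paths, the six mixed triangles give six constraints, the $i$-th forbidding $(\sigma_i,\sigma_{i+1})=(\epsilon_i,\epsilon_i)$ for a sign $\epsilon_i$ determined by the orientation in $\vec T$ of the third edge $v_iv_{i+2}$; one checks that in this encoding the forbidden pattern is always of this ``both equal'' form. The point is that $\vec T$ is $\circ_3$-free: the $G[A]$-edges $v_1v_3,v_3v_5,v_5v_1$ governing $\epsilon_1,\epsilon_3,\epsilon_5$ do not form a directed triangle, so $(\epsilon_1,\epsilon_3,\epsilon_5)$ is not constant, and likewise $(\epsilon_2,\epsilon_4,\epsilon_6)$. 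Up to the dihedral symmetry of $C_6$, the exchange of $A$ and $B$, and the global reversal $\sigma\mapsto-\sigma$, the vector $(\epsilon_i)$ then falls into three cases according to its cyclic run pattern --- runs of lengths $4,2$; runs of lengths $3,3$; or runs of lengths $2,2,1,1$ --- and in each a direct count of the solutions $\sigma$ gives at most $15$, the value $15$ occurring only for the $4,2$ pattern. Hence $m(C_6)\le 15$, which proves the lemma.

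The $C_6$ case is where I expect the real work (and the tightness) to lie. The plain transfer-matrix estimate is not enough there: a uniform constraint, say ``no two cyclically consecutive $+$'s'', leaves $18$ cyclic strings, exceeding $15$. What rescues the bound is precisely that $\circ_3$-freeness of $\vec T$ on the two triangles keeps $(\epsilon_1,\epsilon_3,\epsilon_5)$ and $(\epsilon_2,\epsilon_4,\epsilon_6)$ non-constant. Every other step is either a trivial bound or a $2\times 2$-matrix computation.
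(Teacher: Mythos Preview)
Your proof is correct and reaches the same bound, but the $C_6$ case is handled quite differently from the paper. For $|E_G(A,B)|\le 5$ the paper proceeds much as you do (trivial bounds and the earlier lemmas). For the $C_6$, the paper fixes the orientation of $G[A]$ and part of $G[B]$ by symmetry, then branches on the direction of a single $C_6$-edge; in each branch one further $C_6$-edge is forced and the remaining four edges form a path on five vertices whose three chords are already oriented, so Proposition~\ref{lemma:compatible-P5-square} yields $7$ or $8$ compatible orientations, for a total of $15$. Your argument instead encodes each forbidden triangle by a sign $\epsilon_i$ and counts via the cyclic transfer-matrix trace $\operatorname{tr}(M_{\epsilon_1}\cdots M_{\epsilon_6})$; the key structural input --- that transitivity of $\vec T$ on each triangle forces $(\epsilon_1,\epsilon_3,\epsilon_5)$ and $(\epsilon_2,\epsilon_4,\epsilon_6)$ to be non-constant --- is exactly what rules out the constant-$\epsilon$ configuration (which would give $\operatorname{tr}(M_+^6)=18$) and brings the maximum down to $15$. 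This is more systematic and makes the role of the $\circ_3$-freeness of $\vec T$ transparent, whereas the paper's approach is shorter and reuses Proposition~\ref{lemma:compatible-P5-square} directly.

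One small slip to fix in a write-up: describing the run pattern as ``lengths $2,2,1,1$'' conflates two distinct cyclic compositions, namely $(2,2,1,1)$ (e.g.\ $++--+-$) and $(2,1,2,1)$ (e.g.\ $++-++-$). Both satisfy the non-constancy constraint on odd and even positions, and they give traces $10$ and $11$ respectively, so the bound is unaffected; but there are four orbits, not three.
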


\begin{proof}
	Let $A = \{x_1,x_2,x_3\}$ and $B=\{y_1,y_2,y_3\}$.
	Since $G$ is $K_4$-free, $y_i$ cannot be adjacent to every vertex of $A$, for $i=1,2,3$.
	This implies that $d_A(y_i)\leq 2$, for $i=1,2,3$.
	Analogously, we have $d_B(x_i)\leq 2$, for $i=1,2,3$.
	Thus the set $E$ of edges in $G$ joining $A$ and $B$ induces a set of paths and cycles.
	Since $G$ is $K_4$-free, $E$ does contain a cycle of length $4$.
	If $|E|\leq 3$, then $\ext_G(A,B) \leq 2^{|E|}\leq 8$, as desired.
	If $|E| = 4$, then some vertex, say $x_1\in A$, is incident to two edges of $E$, say $\{x_1,y_1\}$ and $\{x_1,y_2\}$,
	which implies that $\ext_G(x_1,B)\leq 3$,
	and hence $\ext_G(A,B)\leq \ext_G(x_1,B)\cdot 2^{|E\setminus\{\{x_1,y_1\},\{x_1,y_2\}\}|} \leq 12$, as desired.
	If $|E| = 5$, then $|E|$ induces a path of length $5$, say $x_1y_1x_2y_2x_3y_3$.
	In this case, note that $\{x_1,x_2\}$ and $\{y_1,y_2\}$ are disjoint cliques of size $2$,
	and hence, by Lemma~\ref{lemma:compatible-K2-K2}, we have $\ext_G(\{x_1,x_2\},\{y_1,y_2\})\leq 5$.
	Since each edge in $E$ either joins $\{x_1,x_2\}$ to $\{y_1,y_2\}$, or is adjacent to $x_3$,
	we have $\ext_G(A,B)\leq \ext_G(\{x_1,x_2\},\{y_1,y_2\})\cdot\ext_G(x_3,B) \leq 15$, as desired.

	Thus, we may assume $|E|=6$, and hence $E$ induces the cycle $x_1y_1x_2y_2x_3y_3x_1$.
	By symmetry, we may assume that $\{x_1,x_2\}$, $\{x_1,x_3\}$ are both oriented towards $x_1$, and $\{y_1,y_3\}$ is oriented towards $y_1$.
	Suppose $\{y_2,y_3\}$ is oriented towards $y_2$.
	If we orient $\{x_1,y_1\}$ towards $y_1$, then $x_2y_1$ must be oriented towards $y_1$,
	and, since $\{x_1,x_3\}$ and $\{y_2,y_3\}$ are oriented towards $x_1$ and $y_2$,
	by Proposition~\ref{lemma:compatible-P5-square}, 
	there are $7$ compatible orientations of the edges in the path $x_2y_2x_3y_3x_1$ 
	(see Figure~\ref{fig:k3k3-1}).
	If we orient $\{x_1,y_1\}$ towards $x_1$, then $\{y_3,x_1\}$ must be oriented towards $x_1$,
	and by Proposition~\ref{lemma:compatible-P5-square}, 
	there are $8$ compatible orientations of the edges in the path $y_1x_2y_2x_3y_3$
	(see Figure~\ref{fig:k3k3-2}).
	Thus, there are $15$ compatible orientations of $E$, as desired.
	Thus, we may assume that $\{y_2,y_3\}$ is oriented towards $y_3$,
	and hence $\{y_1,y_2\}$ must be oriented towards $y_1$.
	If we orient $\{x_1,y_1\}$ towards $y_1$, then $\{x_2,y_1\}$ must be oriented towards $y_1$,
	and by Proposition~\ref{lemma:compatible-P5-square}, 
	there are $8$ compatible orientations of the edges in the path $x_2y_2x_3y_3x_1$
	(see Figure~\ref{fig:k3k3-3}).
	If we orient $\{x_1,y_1\}$ towards $x_1$, then $\{y_3,x_1\}$ must be oriented towards $x_1$,
	and, since $\{x_1,x_3\}$ and $\{x_1,x_2\}$ are oriented towards $x_1$, regardless of the orientation of $\{x_2,x_3\}$,
	by Proposition~\ref{lemma:compatible-P5-square}, 
	there are $7$ compatible orientations of the edges in the path $y_1x_2y_2x_3y_3$
	(see Figure~\ref{fig:k3k3-4})
	Thus, there are $15$ compatible orientations of $E$, as desired.
\end{proof}

\begin{figure}
	\centering
    \begin{subfigure}{.23\textwidth}
		\begin{tikzpicture}[scale = 0.4]

	\foreach \i in {1,2,3}
	{
		\node (x\i) [black vertex] at (\i*120-30:3) {};
		\node ()				   at (\i*120-30:3.7) {$x_\i$};
		\node (y\i) [black vertex] at (\i*120+30:1) {};
		\node ()				   at (\i*120+30:1.7) {$y_\i$};
	}
	
	\draw[very thick,<-]				(x1) to [bend right] (x2);
	\draw[very thick]					(x2) to [bend right] (x3);
	\draw[very thick,->]				(x3) to [bend right] (x1);
	\draw[very thick]					(y1) -- (y2);
	\draw[very thick,<-,color=yellow!50!green]	(y2) -- (y3);
	\draw[very thick,->]				(y3) -- (y1);
	\draw[very thick,->,color=red]		(x1) -- (y1);
	\draw[very thick,<-,color=blue]		(y1) -- (x2);
	\draw[very thick]					(x2) -- (y2);
	\draw[very thick]					(y2) -- (x3);
	\draw[very thick]					(x3) -- (y3);
	\draw[very thick]					(y3) -- (x1);
\end{tikzpicture}
		\caption{}\label{fig:k3k3-1}
    \end{subfigure}
    \begin{subfigure}{.23\textwidth}
		\begin{tikzpicture}[scale = 0.4]

	\foreach \i in {1,2,3}
	{
		\node (x\i) [black vertex] at (\i*120-30:3) {};
		\node ()				   at (\i*120-30:3.7) {$x_\i$};
		\node (y\i) [black vertex] at (\i*120+30:1) {};
		\node ()				   at (\i*120+30:1.7) {$y_\i$};
	}
	
	\draw[very thick,<-]				(x1) to [bend right] (x2);
	\draw[very thick]					(x2) to [bend right] (x3);
	\draw[very thick,->]				(x3) to [bend right] (x1);
	\draw[very thick]					(y1) -- (y2);
	\draw[very thick,<-,color=yellow!50!green]	(y2) -- (y3);
	\draw[very thick,->]				(y3) -- (y1);
	\draw[very thick,<-,color=red]		(x1) -- (y1);
	\draw[very thick]					(y1) -- (x2);
	\draw[very thick]					(x2) -- (y2);
	\draw[very thick]					(y2) -- (x3);
	\draw[very thick]					(x3) -- (y3);
	\draw[very thick,->,color=blue]					(y3) -- (x1);
\end{tikzpicture}
		\caption{}\label{fig:k3k3-2}
    \end{subfigure}
    \begin{subfigure}{.23\textwidth}
		\begin{tikzpicture}[scale = 0.4]

	\foreach \i in {1,2,3}
	{
		\node (x\i) [black vertex] at (\i*120-30:3) {};
		\node ()				   at (\i*120-30:3.7) {$x_\i$};
		\node (y\i) [black vertex] at (\i*120+30:1) {};
		\node ()				   at (\i*120+30:1.7) {$y_\i$};
	}
	
	\draw[very thick,<-]				(x1) to [bend right] (x2);
	\draw[very thick]					(x2) to [bend right] (x3);
	\draw[very thick,->]				(x3) to [bend right] (x1);
	\draw[very thick,<-]					(y1) -- (y2);
	\draw[very thick,->,color=yellow!50!green]	(y2) -- (y3);
	\draw[very thick,->]				(y3) -- (y1);
	\draw[very thick,->,color=red]		(x1) -- (y1);
	\draw[very thick,<-,color=blue]		(y1) -- (x2);
	\draw[very thick]					(x2) -- (y2);
	\draw[very thick]					(y2) -- (x3);
	\draw[very thick]					(x3) -- (y3);
	\draw[very thick]					(y3) -- (x1);
\end{tikzpicture}
		\caption{}\label{fig:k3k3-3}
    \end{subfigure}
    \begin{subfigure}{.23\textwidth}
		\begin{tikzpicture}[scale = 0.4]

	\foreach \i in {1,2,3}
	{
		\node (x\i) [black vertex] at (\i*120-30:3) {};
		\node ()				   at (\i*120-30:3.7) {$x_\i$};
		\node (y\i) [black vertex] at (\i*120+30:1) {};
		\node ()				   at (\i*120+30:1.7) {$y_\i$};
	}
	
	\draw[very thick,<-]				(x1) to [bend right] (x2);
	\draw[very thick]					(x2) to [bend right] (x3);
	\draw[very thick,->]				(x3) to [bend right] (x1);
	\draw[very thick,<-]					(y1) -- (y2);
	\draw[very thick,->,color=yellow!50!green]	(y2) -- (y3);
	\draw[very thick,->]				(y3) -- (y1);
	\draw[very thick,<-,color=red]		(x1) -- (y1);
	\draw[very thick]					(y1) -- (x2);
	\draw[very thick]					(x2) -- (y2);
	\draw[very thick]					(y2) -- (x3);
	\draw[very thick]					(x3) -- (y3);
	\draw[very thick,->,color=blue]					(y3) -- (x1);
\end{tikzpicture}
		\caption{}\label{fig:k3k3-4}
    \end{subfigure}
    \caption{\small Compatible orientations between two cliques of size $3$ in a $K_4$-free graph.}
    \label{fig:k3k3}
\end{figure}
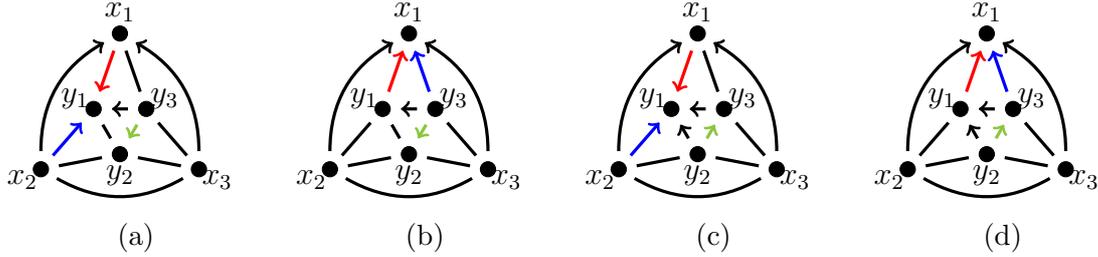

\section{Proof of the main theorem}
\label{sec:main}

In this section we prove our main result, Theorem~\ref{thm:main}.
In order to bound the number of $\circ_3$-free orientations of a graph $G$, we decompose it into disjoint cliques of different sizes and we use the results of Section~\ref{sec:comp} to bound the number of extensions of $\circ_3$-free orientations between those cliques. Before moving to the proof of the main theorem though, we need bounds on the number of $\circ_3$-free orientations of some small graphs. The first one concerns the complete tripartite graph $K_{1,\ell,\ell}$.

\begin{proposition}\label{prop:num_of_tfo-vertex-biclique}
	For any positive integer $\ell$, we have
	$$
	\o{K_{1,\ell,\ell}} = \sum_{i=0}^\ell\sum_{j=0}^\ell {\ell\choose i}{\ell\choose j} 2^{(\ell-i)j + (\ell-j)i} .
	$$
\end{proposition}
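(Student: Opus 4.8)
Let me write $K_{1,\ell,\ell}$ with parts $\{u\}$, $X=\{x_1,\dots,x_\ell\}$, and $Y=\{y_1,\dots,y_\ell\}$. The edges are: $u$ to every $x\in X$, $u$ to every $y\in Y$, and every $x$ to every $y$ (the complete bipartite part $X$–$Y$). Note there are no edges inside $X$ or inside $Y$. A copy of $\circ_3$ must use three mutually adjacent vertices, so it must be of the form $\{u,x,y\}$ with $x\in X$, $y\in Y$; thus an orientation is $\circ_3$-free if and only if for every such triangle $uxy$ the three edges $ux$, $uy$, $xy$ do not form a directed cycle.

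I want to set up the proof by first fixing the orientations of the $2\ell$ edges incident to $u$, then counting the compatible orientations of the $X$–$Y$ edges. The plan is: for a fixed orientation $\vec{D}$ of the star edges at $u$, let $A\subseteq X$ be the set of $x$ with edge oriented $x\to u$ (so $|A|=i$ say, and $\ell-i$ vertices have $u\to x$), and let $B\subseteq Y$ be the set of $y$ with $y\to u$ (so $|B|=j$). For a pair $(x,y)$, the triangle $uxy$ is directed iff the three edges cyclically agree; given the two edges at $u$, exactly one orientation of $xy$ closes a directed cycle, and it does so iff $x$ and $y$ are "consistently oriented" relative to $u$ — precisely, the forbidden orientation of $xy$ exists as a constraint exactly when the edges $ux$ and $uy$ point the same way around, i.e. when ($x\to u$ and $u\to y$) or ($u\to x$ and $y\to u$).

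**The key combinatorial step.**
So I claim: the edge $xy$ is unconstrained (both orientations allowed) precisely when $x$ and $y$ lie on the "same side" with respect to $u$, meaning $x\in A, y\in B$ or $x\notin A, y\notin B$; and $xy$ is forced (only one orientation allowed) when they lie on opposite sides, i.e. $x\in A, y\notin B$ or $x\notin A, y\in B$. Let me double-check with a directed-triangle: $\circ_3$ is the cyclically oriented triangle. If $x\to u$ and $u\to y$, the only way to avoid $x\to u\to y$ closing up is to forbid $y\to x$... wait, I need $x\to u\to y\to x$ to be the directed cycle, so I forbid $y\to x$, leaving $x\to y$ forced — one choice. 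If $x\to u$ and $y\to u$ (both in, so $x\in A$, $y\in B$), there is no directed triangle regardless of $xy$'s orientation — two choices. Good, so the count of "free" edges is $ij + (\ell-i)(\ell-j)$ and the count of "forced" edges is $i(\ell-j)+(\ell-i)j$. Hence the number of $\circ_3$-free orientations with this particular star-orientation is $2^{ij+(\ell-i)(\ell-j)}$, and summing over all $\binom{\ell}{i}$ choices of $A$ and $\binom{\ell}{j}$ choices of $B$ gives
$$
\o{K_{1,\ell,\ell}} = \sum_{i=0}^\ell\sum_{j=0}^\ell \binom{\ell}{i}\binom{\ell}{j}\, 2^{ij+(\ell-i)(\ell-j)}.
$$
This matches the claimed formula after substituting $i\mapsto \ell-i$, $j\mapsto \ell-j$ (equivalently, relabel $A$ as the "out-side" instead of the "in-side"), since $\binom{\ell}{i}=\binom{\ell}{\ell-i}$ and $ij+(\ell-i)(\ell-j) = (\ell-i)j + (\ell-j)i$ after that swap — let me just be careful: with $i'=\ell-i$, $j'=\ell-j$ we get exponent $(\ell-i')(\ell-j') + i'j'$, which is the same shape; to get exactly $(\ell-i)j+(\ell-j)i$ I instead set $A$ = out-neighbors of $u$ in $X$ and $B$ = in-neighbors of $u$ in $Y$ (an asymmetric but legitimate bookkeeping), so that the "free" pairs become $x\in A, y\in B$ type giving exponent $(\ell-i)j$ ... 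I will pick the bookkeeping that produces the stated exponent directly.

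**What the work is.**
The only real content is the case analysis verifying the free/forced dichotomy for a single triangle $uxy$ as a function of the two incident star-edge directions, plus the observation that distinct triangles share no $X$–$Y$ edge (each edge $xy$ lies in exactly one triangle, namely $uxy$), so the constraints on the $X$–$Y$ edges are completely independent and the count multiplies. I expect the main (very mild) obstacle is simply getting the index convention to match the stated formula exactly; the mathematics is a short deterministic check of four cases for the triangle and a clean product-rule argument.
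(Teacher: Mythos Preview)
Your proof is correct and follows essentially the same approach as the paper: fix the orientations of the $2\ell$ star edges at the apex, classify each bipartite edge $xy$ as forced or free according to whether the directions of $ux$ and $uy$ allow a directed triangle, and multiply. The paper uses precisely the asymmetric bookkeeping you arrive at in your last paragraph (taking $i$ out-neighbors on one side and $j$ in-neighbors on the other), which yields the exponent $(\ell-i)j+(\ell-j)i$ directly.
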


\begin{proof}
	Let $K_{1,\ell,\ell}$ be a complete tripartite graph with vertex partition $\{v\}\cup A\cup B$. For $1\leq i,j\leq \ell$ there are $\binom{\ell}{i}\binom{\ell}{j}$ orientations of the edges incident to $v$ with exactly $i$ out-neighbors of $v$ in $A$, and $j$ in-neighbors of $v$ in $B$, sets which we denote by $A^+$ and $B^-$ respectively. For each of those orientations, the edges between $A^+$ and $B^-$ and between $A\setminus A^+$ and $B\setminus B^-$ are forced in any $\circ_3$-free orientation. Since any of the other $(\ell-i)j + (\ell-j)i$ edges can be oriented in two ways, we sum over $i$ and $j$ to get
	\[
	\o{K_{1,\ell,\ell}} = \sum_{i=0}^\ell\sum_{j=0}^\ell {\ell\choose i}{\ell\choose j} 2^{(\ell-i)j + (\ell-j)i}.
	\]
\end{proof}

In the rest of the paper, we count the number of $\circ_3$-free orientations of a graph by decomposing its vertex set and we often use the following inequality without explicit reference.
For a partition of the vertices of a graph $G$ into sets $A$ and $B$ we have, from the definition of $\ext_G(A,B)$, that
	\begin{equation*}
	\o{G}	\leq \o{G[A]}\cdot \ext_G(A,B) \cdot\o{G[B]},
	\end{equation*}
When \(A\) is a clique, 
we define $m_{A,B}=\max\{|N(v,B)|+1: v\in A\}$ and use the bound
 \[\ext_G(A,B) \leq (m_{A,B})^{|A|}.\]
 In the proof of Theorem \ref{thm:main}, we first show  that $\o{G}<2^{\lfloor n^2/4\rfloor}$ for every graph containing a $K_4$. For $K_4$-free graphs we may use Lemma~\ref{lemma:compatible-K3-K3} to bound the number of extensions between two triangles.
 But when considering graphs with no two disjoint triangles, we need the following result.
 
\begin{lemma}\label{lemma:claim}
Let $H$ be a $K_4$-free graph with $7$ vertices that contains a triangle $T$, a matching $\{e_1,e_2\}$ such that $e_1$ and $e_2$ are not incident to the vertices of $T$, and that does not contains two vertex-disjoint triangles.
Then, $\o{H}< 2^{12}$.
\end{lemma}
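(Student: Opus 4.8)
The plan is to split the vertex set of $H$ into the triangle $T$ and the four‑element set $D:=V(e_1)\cup V(e_2)$, and then to argue according to the value of $e(H)$. Since $T$ is a triangle disjoint from $D$ and $H$ has no two vertex‑disjoint triangles, the graph $H[D]$ is triangle‑free; hence every orientation of $H[D]$ is $\circ_3$‑free and $\o{H[D]}=2^{e(H[D])}$. Because $H$ is $K_4$‑free we also have $e(H[D])\le 4$, every vertex of $D$ has at most two neighbours in $V(T)$, and for each edge $xy$ of $H[D]$ at most one vertex of $V(T)$ is adjacent to both $x$ and $y$ (otherwise $\{x,y\}$ together with those two vertices would induce a $K_4$). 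In particular $e(H)\le 3+4+8=15$. If $e(H)\le 12$, then because every $\circ_3$‑free orientation of $H$ restricts to a transitive --- hence acyclic --- orientation of $T$, we get
\[
\o{H}\le 6\cdot 2^{e(H)-3}=\tfrac34\cdot 2^{e(H)}\le \tfrac34\cdot 2^{12}<2^{12},
\]
and we are done; so from now on $13\le e(H)\le 15$.

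The core of the argument is the claim that in this range some vertex $z$ of $H$ lies in \emph{every} triangle of $H$, i.e.\ that $H-z$ is triangle‑free. I would prove this by a short structural analysis: if no such $z$ existed, the triangles of $H$ would be pairwise intersecting with no common vertex, and one checks that inside a $K_4$‑free graph on $7$ vertices --- using in addition the matching hypothesis and that $e(H)>\ex(7,K_3)=12$ --- this is impossible once $e(H)\ge 13$. Granting the claim, fix such a vertex $z$ and orient $H-z$ arbitrarily (any of the $2^{e(H-z)}$ orientations is $\circ_3$‑free). An orientation of the edges incident to $z$ is then $\circ_3$‑free if and only if the set of out‑neighbours of $z$ inside $N_H(z)$ is closed under following the oriented edges of $H[N_H(z)]$ forward; counting these ``closed'' subsets for each orientation of $H-z$ and summing gives the exact identity
\[
\o{H}\;=\;2^{\,e(H-z)}\sum_{S\subseteq N_H(z)}2^{-\,e_H\!\bigl(S,\,N_H(z)\setminus S\bigr)}.
\]

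To finish, note that $H-z$ is triangle‑free on $6$ vertices, so $e(H-z)\le \ex(6,K_3)=9$, while $\deg_H(z)\le 6$ forces $e(H-z)=e(H)-\deg_H(z)\ge 7$; thus $e(H-z)\in\{7,8,9\}$. In each case a direct estimate of $\sum_{S\subseteq N_H(z)}2^{-e_H(S,N_H(z)\setminus S)}$ --- using that $H[N_H(z)]$ is triangle‑free, and, when $e(H-z)=9$, that $H-z$ must be $K_{3,3}$ by Turán's theorem --- bounds this sum below $2^{12-e(H-z)}$, whence $\o{H}<2^{12}$. The tight instance of the whole estimate is $H\cong K_{1,3,3}$ (with $z$ its vertex of degree $6$), for which Proposition~\ref{prop:num_of_tfo-vertex-biclique} with $\ell=3$ gives the reassuring bound
\[
\o{K_{1,3,3}}=\sum_{i=0}^{3}\sum_{j=0}^{3}\binom{3}{i}\binom{3}{j}\,2^{\,3i+3j-2ij}=2754<2^{12}.
\]

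The step I expect to be the main obstacle is the structural claim for $13\le e(H)\le 15$ --- locating a single vertex that meets all triangles. The naive extension bounds, such as $\o{H}\le 6\cdot\ext_H(V(T),D)\cdot 2^{e(H[D])}$ together with $\ext_H(V(T),D)\le m_{V(T),D}^{\,3}$, or even the sharper product $\ext_H(V(T),D)\le\prod_{v\in V(T)}\ext_H(v,N(v)\cap D)$ refined by the savings coming from triangles inside $V(T)\cup D$, are too lossy when $H[D]$ is a $4$‑cycle, which carries many overlapping triangles with $V(T)$; this is exactly the configuration where one must first pin down the structure of $H$ (it collapses essentially to $K_{1,3,3}$) before the exact count above can be brought to bear.
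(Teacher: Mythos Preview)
Your outline is a genuinely different route from the paper's, but as written it contains two real gaps, and neither is as short as you suggest.

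\textbf{The structural claim.} You assert that when $13\le e(H)\le 15$ some vertex $z$ meets every triangle, and you describe this as ``a short structural analysis'' that you then do not carry out. This claim is in fact correct, but proving it takes work. One has to show that if three triangles pairwise intersect with no common vertex, then (using $K_4$-freeness) they occupy six distinct vertices $a,b,c,d,e,f$ with a forced fourth triangle on $\{a,b,d\}$; then check that \emph{no} further edge among these six can be added without creating a $K_4$ or a second disjoint triangle; and finally verify that the seventh vertex $g$ can be joined to at most three of them under the same constraints, forcing $e(H)\le 12$. This is a genuine case analysis, not a one-liner, and since you explicitly flag it as ``the main obstacle'' you should actually supply it.

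\textbf{The sum estimate.} Your identity
\[
D(H,\circ_3)=2^{\,e(H-z)}\sum_{S\subseteq N_H(z)}2^{-e_H(S,\,N_H(z)\setminus S)}
\]
is correct and elegant, but the phrase ``a direct estimate \dots\ bounds this sum below $2^{12-e(H-z)}$'' is just a restatement of the goal for this subclass of graphs. The trivial bound $\sum_S(\cdots)\le 2^{|N_H(z)|}=2^{\deg(z)}$ is far too weak (for $e(H-z)=7$, $\deg(z)=6$ it gives $64$, while you need $<32$). To get below $2^{12-e(H-z)}$ you must exploit the edges of the triangle-free graph $H[N_H(z)]$, and this again devolves into case analysis over the possible structures of $H-z$ and $H[N_H(z)]$ for each value of $e(H-z)\in\{7,8,9\}$. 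Only the extremal subcase $H-z\simeq K_{3,3}$ with $\deg(z)=6$ is actually handled by your citation of Proposition~\ref{prop:num_of_tfo-vertex-biclique}.

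\textbf{Comparison with the paper.} The paper avoids both of these detours: it partitions $V(H)$ as $V(T)\cup e_1\cup e_2$, uses the ready-made bounds $\ext_H(T,e_i)\le 8$ (Corollary~\ref{cor:compatible-edge-Kr}) and $\ext_H(e_1,e_2)\le 4$, and does a short case split on $|E_H(e_1,e_2)|$ and $|E_H(e_1\cup e_2,V(T))|$. The dense extremal case collapses to $H\simeq K_{1,3,3}$ directly from Tur\'an and the no-two-disjoint-triangles hypothesis, and Proposition~\ref{prop:num_of_tfo-vertex-biclique} finishes it. Your approach could be made to work, but in its present form the two missing pieces are at least as much work as the paper's entire proof.
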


\begin{proof}
Let $H$ be as in the statement. Recall that $\o{T} = 6$ and $\o{e_1} = \o{e_2} = 2$. Moreover, $\ext_H(T,e_i)\leq 8$ for $i=1,2$, by Corollary~\ref{cor:compatible-edge-Kr}. 
Also, since $H[e_1\cup e_2]$ is triangle-free, $E_H(e_1,e_2)\leq 2$ and hence $\ext_H(e_1,e_2)\leq 4$.
Throughout the proof, we use each of these bounds unless the structure of $H$ allows us to obtain a better bound. 

First note that if there is at most one edge between $e_1$ and $e_2$, then $\ext_H(e_1,e_2)\leq 2$. In this case we use the bound
\begin{equation*}
\o{H} \leq \o{T}\cdot\o{e_1}\cdot\o{e_2}\cdot\ext_H(T,e_1)\cdot\ext_H(T,e_2)\cdot\ext_H(e_1,e_2),
\end{equation*}

\noindent to obtain $\o{H} \leq 6\cdot 2\cdot 2\cdot 8\cdot 8\cdot 2 < 2^{12}$, which allows us to restrict to graphs $H$ such that $H[e_1\cup e_2]\simeq K_{2,2}$.

We count the number of orientations by considering different values of $E_H(e_1\cup e_2,V(T))$. In particular, since $H$ is $K_4$-free, we have that $E_H(e_i, V(T))\leq 4$ for $i=1,2$. First note that if $E(e_i,V(T))=3$ for $i=1,2$, then $\ext_H(e_i,T)\leq 6$. Therefore, if there are at most six edges between $H[e_1\cup e_2]$ and $T$,
then either there are at most three edges between each \(e_i\) and \(T\),
which implies \(\ext_G(T,e_1), \ext_G(T,e_2)\leq 6\);
or, without loss of generality, there are at most two edges between \(e_1\) and \(T\),
which implies \(\ext_G(T,e_1)\leq 4\).
In both cases we have that $\ext_G(T,e_1)\cdot \ext_G(T,e_2)\leq 36$ and consequently that $\o{H}\leq 6\cdot 2 \cdot 2 \cdot 36 \cdot 4 < 2^{12}$.

Thus, we assume that $7\leq E_H(e_1\cup e_2, V(T))\leq 8$. 
Then, without loss of generality, we have $E_H(e_1,V(T))=4$ and, by Turán's Theorem, $H_1=H[e_1\cup V(T)]$ is isomorphic to $K_{1,2,2}$.
If $E_H(e_2,V(T))=3$,  the aforementioned bounds and  Lemma~\ref{prop:num_of_tfo-vertex-biclique} yields
\[\o{H}\leq \o{H_1}\cdot \ext_H(e_2,T)\cdot \ext(e_1,e_2)\leq 82\cdot 8\cdot 4 <2^{12}.\]
Finally, if $E_H(e_i,V(T))=4$ for both $i=1,2$, then the graphs $H_i=H[e_i\cup V(T)]$ are isomorphic to $K_{1,2,2}$ with $v_i\in V(T)$ being the vertex of degree $4$ in $H_i$. Since $H$ does not contain two disjoint triangles, then $v_1=v_2$ and since $H[e_1\cup e_2]\simeq K_{2,2}$, we have in fact $H\simeq K_{1,3,3}$. 
Finally, Lemma~\ref{prop:num_of_tfo-vertex-biclique} yields $\o{H}=2754<2^{12}$.
\end{proof}
 
In the remainder of this section we prove Theorem~\ref{thm:main}, which follows by induction on the number of vertices. Unfortunately, we need a slightly stronger base of induction than the one given by Proposition~\ref{prop:Alon-small-cases}, which is the content of the next proposition. We present its proof in the Appendix (Section~\ref{sec:appendix}).
Recall that the \emph{clique number} of a graph \(G\), denoted by \(\omega(G)\), 
is the size of a clique in \(G\)  with a maximum number of vertices.

\begin{proposition}\label{prop:small-cases}
	Let $G$ be an $n$-vertex graph.
	If $9\leq n\leq 7+\min\{\omega(G),8\}$, then $\o{G} \leq 2^\bound$.
	Furthermore, $\o{G} = 2^\bound$ if and only if $G \simeq K_{\lfloor n/2\rfloor,\lceil n/2\rceil}$.
\end{proposition}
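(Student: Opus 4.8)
\textbf{Proof plan for Proposition~\ref{prop:small-cases}.}

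The plan is to prove both the bound and the extremal characterization by a case analysis on the clique number $\omega = \omega(G)$, using the extension lemmas of Section~\ref{sec:comp} to control how a $\circ_3$-free orientation of $G$ decomposes once we fix a maximum clique $A$. The key observation is that, writing $B = V(G)\setminus A$ with $|A|=\omega$, $|B| = n-\omega$, we have
\[
\o{G} \;\le\; \o{G[A]}\cdot \ext_G(A,B)\cdot \o{G[B]} \;=\; \omega!\cdot \ext_G(A,B)\cdot \o{G[B]},
\]
and since $G[B]$ has $n-\omega \le 7$ vertices, Proposition~\ref{prop:Alon-small-cases} gives $\o{G[B]} \le \max\{2^{\lfloor (n-\omega)^2/4\rfloor}, (n-\omega)!\} = (n-\omega)!$ (the factorial dominates for at most $7$ vertices). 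It then remains to bound $\ext_G(A,B)$ well enough that $\omega!\cdot \ext_G(A,B)\cdot (n-\omega)! < 2^{\lfloor n^2/4\rfloor}$ strictly, except in the cases that must be examined by hand.

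First I would dispose of the large-clique range. When $\omega$ is close to $n$ (say $\omega \ge n-2$), $B$ is tiny, so $\ext_G(A,B)$ is bounded by a small constant power: if $|B|=1$, $\ext_G(A,B)=d_G(v,A)+1\le \omega+1$ by Proposition~\ref{lemma:compatible-vertex-clique}; if $|B|=2$ one uses Lemma~\ref{lemma:compatible-edge-Kr} or the crude $\ext_G(A,B)\le (m_{A,B})^{|A|}$ with $m_{A,B}\le \omega$ (since $G$ has no $K_{\omega+1}$, each vertex of $B$ misses a vertex of $A$, so $|N(v,A)|\le \omega-1$), etc. In each such case $\omega!\cdot(\text{const})\cdot(n-\omega)!$ is easily seen to be far below $2^{\lfloor n^2/4\rfloor}$ for $9\le n\le 15$ — these are finitely many inequalities in $\omega$ and $n$ that can be verified by a short computer check, as the paper's overview promises. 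In the middle of the range, for $4 \le \omega \le n-3$, the bound $\ext_G(A,B)\le \omega^{\,\omega}$ (from $m_{A,B}\le\omega$) combined with $\omega!\cdot \omega^{\omega}\cdot (n-\omega)! < 2^{\lfloor n^2/4\rfloor}$ should again hold for all relevant $n\le 15$; since $\lfloor n^2/4\rfloor$ grows quadratically while the left side grows only like a product of factorials and $\omega^\omega$ with $\omega\le n$, a direct numerical sweep over the finitely many pairs $(n,\omega)$ settles it.

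The genuinely delicate part is $\omega \le 3$, i.e.\ $G$ is $K_4$-free (and when $\omega=3$, $n\le 10$; when $\omega = 2$, $G$ is triangle-free and $n\le 9$, where $G$ bipartite forces $\o{G}=2^{e(G)}\le 2^{\lfloor n^2/4\rfloor}$ with equality iff $G=K_{\lfloor n/2\rfloor,\lceil n/2\rceil}$, using that any triangle-free graph on $9$ vertices with $\lfloor 81/4\rfloor = 20$ edges is $K_{4,5}$ — and here one must also handle non-bipartite triangle-free $G$, where an odd cycle forces some orientations and $\o{G} < 2^{e(G)}$, combined with $e(G)\le\ex(n,C_{\text{odd}})$). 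For $\omega=3$ the strategy is to extract a triangle $T$ and then analyze $G - V(T)$, which has $n-3\le 7$ vertices; one splits further according to whether $G-V(T)$ contains a matching of size $2$ avoiding $T$ and whether $G$ has two vertex-disjoint triangles, feeding into Lemma~\ref{lemma:compatible-K3-K3} (two disjoint triangles, $\ext\le 15$), Lemma~\ref{lemma:claim} (a triangle plus a far matching, no two disjoint triangles, $\o{H}<2^{12}$ on $7$ vertices), and Corollary~\ref{cor:compatible-edge-Kr}. I expect this $K_4$-free regime, and especially pinning down that equality $\o{G}=2^{\lfloor n^2/4\rfloor}$ happens \emph{only} for the balanced complete bipartite graph (ruling out near-extremal sparse $K_4$-free graphs such as blow-ups of $C_5$ or $K_{\lfloor n/2\rfloor,\lceil n/2\rceil}$ with an edge moved), to be the main obstacle; the natural resolution is to combine the structural lemmas above with a finite computer verification for the handful of remaining $n\le 10$ graphs that survive the analytic bounds, exactly as the paper indicates is done in the Appendix.
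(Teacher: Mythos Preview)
Your overall skeleton — split on $\omega(G)$, peel off a maximum clique $A$, bound $\o{G}\le \omega!\cdot\ext_G(A,B)\cdot\o{G[B]}$ with $\o{G[B]}\le (n-\omega)!$, and treat small $\omega$ structurally — is exactly the paper's plan. But your ``middle range'' step has a real gap.

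First, the bound you quote is misapplied: with $A$ the clique and $B=V(G)\setminus A$, the quantity $m_{A,B}=\max_{v\in A}(|N(v,B)|+1)$ is at most $|B|+1=n-\omega+1$, not $\omega$, so ``$\ext_G(A,B)\le\omega^\omega$ from $m_{A,B}\le\omega$'' is unjustified. The correct per-vertex bound is $\ext_G(v,A)\le\omega$ for each $v\in B$ (Proposition~\ref{lemma:compatible-vertex-clique} plus $K_{\omega+1}$-freeness), giving $\ext_G(A,B)\le\omega^{\,n-\omega}$. More importantly, with either exponent the product $\omega!\cdot\omega^{\,n-\omega}\cdot(n-\omega)!$ is \emph{not} below $2^{\lfloor n^2/4\rfloor}$ on the whole range: e.g.\ $(\omega,n)=(5,9)$ gives $5!\cdot5^4\cdot4!=1{,}800{,}000>2^{20}$, and similarly $(5,10)$, $(5,11)$, $(4,9)$, $(4,10)$, $(4,11)$ all fail. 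So your ``direct numerical sweep'' does not close these cases. The paper handles $\omega\in\{4,5\}$ by a genuinely finer decomposition of $B$: it extracts a maximum matching $M$ in $G-A$ (and, for $\omega=4$, possible further cliques/triangles in $G-A$), and replaces the vertex-wise factor $\omega^2$ for an edge $e\in M$ by the sharper edge-into-clique bound $\ext_G(e,A)\le r^2-\binom{r-1}{2}$ from Corollary~\ref{cor:compatible-edge-Kr} (i.e.\ $19$ for $r=5$, $13$ for $r=4$), together with Lemma~\ref{lemma:compatible-K3-K3} and a minimum-degree reduction. That matching refinement is the missing idea in your plan.

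A smaller error: for $\omega=2$ you worry about ``non-bipartite triangle-free $G$, where an odd cycle forces some orientations''. It does not. If $G$ is triangle-free then \emph{every} orientation is $\circ_3$-free, so $\o{G}=2^{e(G)}$ regardless of bipartiteness, and Mantel's theorem alone gives the bound and the equality case. Your proposed extra argument there is both unnecessary and incorrect.
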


We are now ready to prove Theorem~\ref{thm:main}, which is rewritten as follows:

\begin{theoremnull}[Theorem~\ref{thm:main}]\label{thm:large}
	Let $G$ be an $n$-vertex graph.
	If $n\geq 8$, then
	$
	\o{G} \leq 2^\bound.
	$	
	Furthermore, $\o{G} = 2^\bound$ if and only if $G \simeq K_{\lfloor n/2\rfloor,\lceil n/2\rceil}$.
\end{theoremnull}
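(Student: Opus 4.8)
The plan is to argue by induction on $n$, with the base cases $8\le n\le 16$ (where $\omega(G)\le 8$ forces $n\le 7+\min\{\omega(G),8\}$, and otherwise $\omega(G)\ge 9$) handled by Proposition~\ref{prop:Alon-small-cases} and Proposition~\ref{prop:small-cases}. For the inductive step, let $r=\omega(G)$ and pick a maximum clique $A$ with $|A|=r$; set $B=V(G)\setminus A$, so $|B|=n-r$. We split according to the size of $r$.

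\emph{Large cliques.} If $r$ is large (say $r\ge 9$, so that the induction hypothesis applies to $G[B]$ once $n-r\ge 8$, and the base cases cover the rest), we bound $\o{G}\le \o{G[A]}\cdot\ext_G(A,B)\cdot\o{G[B]}$. Since $G[A]=K_r$ is transitively orientable in $r!$ ways, $\o{G[A]}=r!$. For the cross term we use $\ext_G(A,B)\le (m_{A,B})^{r}$ where $m_{A,B}=\max_{v\in A}(|N(v,B)|+1)\le n-r+1$ (a vertex of $A$ has at most $n-r$ neighbours in $B$); in fact we can do better vertex by vertex, but the crude bound $\ext_G(A,B)\le (n-r+1)^r$ combined with $\o{G[B]}\le 2^{\lfloor (n-r)^2/4\rfloor}$ from induction should already give $r!\,(n-r+1)^r\,2^{\lfloor (n-r)^2/4\rfloor}<2^{\lfloor n^2/4\rfloor}$ once $r\ge 4$, because $r!$ and the polynomial factor are dwarfed by the gap $\lfloor n^2/4\rfloor-\lfloor (n-r)^2/4\rfloor\approx r(n-r/2)$ — this is the exponent count of how many edges leave $A$, which is at least $\binom{r}{2}+$ (edges to $B$) and the balanced bipartite target uses essentially $rn-r^2/\,$something fewer. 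The clean way is: the number of edges of $G$ incident to $A$ is at most $\binom r2 + r(n-r)$, and one checks $\binom r2 + r(n-r) + \lfloor(n-r)^2/4\rfloor \le \lfloor n^2/4\rfloor$ with room to spare for $r\ge 3$, $n-r\ge 0$; the slack absorbs $\log_2(r!)$ and the $\log_2$ of the polynomial. Equality analysis then forces $r=2$, i.e.\ $G$ triangle-free, and $G$ must be $K_{\lfloor n/2\rfloor,\lceil n/2\rceil}$ by the standard Turán/Kővári–Sós–Turán uniqueness together with the fact that every edge then orients freely.

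\emph{Small cliques ($r=3$, i.e.\ $K_4$-free $G$ with a triangle).} This is the delicate regime and I expect it to be the main obstacle. Here we cannot afford the crude polynomial bound; instead we repeatedly peel off a triangle $T$ and use Lemma~\ref{lemma:compatible-K3-K3} ($\ext_G(T,B)$ involving another triangle is $\le 15<16=2^4$), Corollary~\ref{cor:compatible-edge-Kr}, Lemma~\ref{lemma:compatible-K2-K2}, and Lemma~\ref{lemma:compatible-vertex-clique}. The key structural dichotomy: either $G$ contains two vertex-disjoint triangles, or it does not. If it does, we can try to decompose $V(G)$ into triangles plus edges plus isolated vertices greedily and multiply the bounds; each triangle contributes $6=2^{\log_2 6}$ with $\log_2 6<3$, each edge contributes $2$, and each cross term between two triangles contributes $\le 15<2^4$, between a triangle and an edge $\le 8=2^3$ (Corollary~\ref{cor:compatible-edge-Kr} with $r=3$), and between two edges $\le 5<2^3$. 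A careful accounting of these exponents against $\lfloor n^2/4\rfloor$, exploiting that $6<8$ and $15<16$ and $5<8$ create a per-object deficit, should close the case and again force equality only in the triangle-free situation — contradiction, so strict inequality holds whenever $G$ has a triangle. If $G$ has \emph{no} two disjoint triangles, then $G$ has a small "triangle core": all triangles pairwise intersect, so (by a Helly-type / sunflower argument for triangles) either all triangles share a common vertex or they all lie inside a bounded-size subgraph; we isolate a subgraph $H$ of size $\le 7$ on which Lemma~\ref{lemma:claim} applies (it bounds $\o{H}<2^{12}$ for exactly such configurations), and bound the rest of $G$ — which is triangle-free — by Turán, stitching with $\ext$ bounds.

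\emph{Remaining case $r=2$.} If $G$ is triangle-free then every orientation is $\circ_3$-free, so $\o{G}=2^{e(G)}\le 2^{\ex(n,K_3)}=2^{\lfloor n^2/4\rfloor}$ by Turán's theorem, with equality iff $e(G)=\lfloor n^2/4\rfloor$ iff $G\simeq K_{\lfloor n/2\rfloor,\lceil n/2\rceil}$ (uniqueness of the Turán graph). Combining the three regimes: in every case $\o{G}\le 2^{\lfloor n^2/4\rfloor}$, and equality occurs only for the balanced complete bipartite graph. The hardest part, as noted, is making the exponent bookkeeping in the $K_4$-free case tight enough — the slack between $6$ and $8$, and between $15$ and $16$, must be shown to outweigh all the cross-term contributions simultaneously; this is presumably why the authors need the strengthened base case of Proposition~\ref{prop:small-cases} up to $13$ vertices rather than just $n=8$.
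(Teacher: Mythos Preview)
Your overall architecture (induction on $n$, split by $r=\omega(G)$, Mantel for $r=2$) matches the paper, and your $r=2$ case is fine. But there are genuine gaps in the other cases.

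\textbf{Base cases.} Proposition~\ref{prop:small-cases} only applies when $n\le 7+\min\{\omega(G),8\}$; it does \emph{not} cover all graphs with $8\le n\le 16$. For instance, a $K_4$-free graph on $15$ vertices has $\min\{\omega(G),8\}\le 3$, so the proposition says nothing. Your parenthetical ``$\omega(G)\le 8$ forces $n\le 7+\min\{\omega(G),8\}$'' is simply false. The paper does not set up a block of base cases; instead, inside the induction step it peels off a clique of size $s=\min\{r,8\}$, invokes Proposition~\ref{prop:small-cases} when $n\le 7+s$, and otherwise has $n-s\ge 8$ so induction applies to $G-K$.

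\textbf{The ``clean way'' inequality is false.} You assert $\binom r2 + r(n-r) + \lfloor(n-r)^2/4\rfloor \le \lfloor n^2/4\rfloor$, but for $r=4$, $n=12$ the left side is $6+32+16=54$ and the right side is $36$. There is no slack here to absorb anything; the inequality goes the wrong way by a wide margin. More to the point, the crude per-vertex bound $\ext_G(v,K)\le r$ gives $\o{G}\le r!\cdot r^{\,n-r}\cdot 2^{\lfloor(n-r)^2/4\rfloor}$, and for $r=4$ this reduces to $24\cdot 4^{\,n-4}<2^{2n-4}$, i.e.\ $24<16$, which fails. So $r=4$ genuinely cannot be closed by the naive bound. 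The paper handles $r=4$ by first disposing of vertices of degree $<(n-1)/2$ via induction, then taking a maximum matching $M$ in $G-K$ and using Corollary~\ref{cor:compatible-edge-Kr} to get $\ext_G(e,K)\le 13$ for each $e\in M$; the saving $(13/16)^{|M|}$ with $|M|\ge 2$ is exactly what beats the factor $24/16$. You have no substitute for this step.

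\textbf{The $r=3$ case.} Your sketch is in the right spirit and close to the paper's decomposition into $x$ disjoint triangles, a matching of size $y$, and $z$ leftover vertices, with the multiplicative bound $6^x\cdot 15^{\binom x2}\cdot 8^{xy}\cdot 2^{y^2}\cdot 3^{xz}\cdot 2^{yz}$. But the ``careful accounting'' you defer is the whole point: this product beats $2^{\lfloor n^2/4\rfloor}$ only when $x\ge 3$ or $z\ge 2$, and the residual cases $x\in\{1,2\}$, $z\le 1$ require separate structural arguments (for $x=2$ an $8$- or $9$-vertex subgraph and Proposition~\ref{prop:small-cases}; for $x=1$ precisely Lemma~\ref{lemma:claim} on a $7$-vertex subgraph). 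Your Helly/sunflower remark does not produce this. The proposal is a plausible outline, but the places where the actual work happens are either wrong (the displayed inequality) or left as assertions.
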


\begin{proof}
Let $r=\omega(G)$.
The proof follows by induction on $n$.
By Proposition~\ref{prop:Alon-small-cases}, the statement holds for $n=8$. 
If $9\leq n\leq 10$, then the result follows from Mantel's Theorem (see~\cite{Ma1907}) for $r=2$, 
and from Proposition~\ref{prop:small-cases} for $r\geq 3$, as  $n \leq 7+\min\{r,8\}$.
Thus, assume $n\geq 11$ and suppose that the statement holds for any graph with less than $n$ vertices (but at least 8 vertices).

Let $K$ be a clique of $G$ of size $s=\min\{r,8\}$.
If $n\leq 7+s$, then the result follows from Proposition~\ref{prop:small-cases}, so we may assume that 
$n-s\geq 8$.
Thus, we can apply the induction hypothesis for any subgraph of $G$ with at least $n-s$ vertices.

If $r\geq 8$, then we have $s=8$.
By Proposition~\ref{prop:Alon-small-cases}, we have $\o{K}\leq 2^{16}$ and,
by Proposition~\ref{lemma:compatible-vertex-clique}, for each vertex $v\in V(G-K)$ we have $\ext_G(v,K) \leq 9$.
Therefore, applying the induction hypothesis to $G-K$ we have
\begin{align}
\label{eq:grandever1}
\o{G}	&\leq \o{K}\cdot \ext_G(G-K,K)\cdot \o{G-K}\nonumber\\
	&\leq   2^{16}\cdot 9^{n-8}\cdot 2^{(n-8)^2/4}
	< 2^\bound,
\end{align}
 where we used that $n-8\geq 1$.
From now on we assume that $r\leq 7$ and consequently that $s=r$.
Due to the different structure of the graphs with small clique numbers, we divide the rest of the proof according to the value of~$r$.

\smallskip\noindent
\textbf{Case $\mathbf{r\in\{5,6,7\}}$.}
Let $G'=G-K$.
Since $G$ is $K_{r+1}$-free, every vertex $v$ of $V(G')$ is adjacent to at most $r-1$ vertices of $K$.
Then, by Proposition~\ref{lemma:compatible-vertex-clique}, we have $\ext_G(v,K)\leq r$ for every $v\in V(G')$.
Therefore, the following holds for $r\in\{5,6,7\}$ and $n\geq 9$.
\begin{align}
\label{eq:grandever2}
	\o{G}	&\leq	 \o{K}\cdot \ext_G(G',K)\cdot \o{G'}\nonumber\\
		&\leq r!\cdot r^{n-r} \cdot \o{G'}\nonumber\\
		&\leq r! \cdot 2^{(n-r)\log_2 r}\cdot 2^{(n-r)^2/4}\nonumber\\
		&< 2^{\frac{r^2 + 2r(n-r)-1}{4} + \frac{(n-r)^2}{4}} \leq 2^\bound.
\end{align}

\smallskip\noindent
\textbf{Case $\mathbf{r=4}$.}
Let $G'=G-K$.
By the induction hypothesis, for any $u\in V(G)$, we have $\o{G-u}\leq 2^{\lfloor(n-1)^2/4\rfloor}$.
If $G$ contains a vertex $u$ with degree smaller than $(n-1)/2$,
then
\begin{equation}
\label{eq:grandever3}
 \o{G}< \o{G-u}\cdot 2^{d(u)}\leq 2^\bound.
 \end{equation}
Thus, we may assume that $\delta(G)\geq (n-1)/2$.
Since $n\geq 11$, we have $\delta(G)\geq 5$ and since $G$ is $K_5$-free, each vertex in $V(G')$
contains at most $3$ neighbors in $K$.
Hence, we have $\delta(G')\geq 2$.
Therefore, since $|V(G')|\geq 7$, there is a matching with at least two edges in~$G'$.

Let $y\geq 2$ be the size of a maximum matching $M$.
By Lemma~\ref{lemma:compatible-edge-Kr},
we have $\ext_G(e,K)\leq 13$, for every $e\in E(G')$.
Moreover, since every vertex in $V(G')$ has at most $3$ neighbors in $K$,
by Proposition~\ref{lemma:compatible-vertex-clique}, we have $\ext_G(v,K) \leq 4$
for every $v\in V(G')\setminus V(M)$.
Therefore, we have
\begin{align}
\label{eq:grandever4}
\o{G}	&\leq	 \o{K}\cdot \ext_G(V(M),K) \cdot \ext_G(V(G')\setminus V(M),K) \cdot \o{G'}\nonumber\\
	&\leq	4! \cdot 13^y \cdot 4^{n-4-2y}\cdot 2^{\lfloor (n-4)^2/4\rfloor}\nonumber\\
	&\leq 3\cdot \left(\dfrac{13}{16}\right)^y\cdot 2^3\cdot 2^{2(n-4)}\cdot 2^ {(n-4)^2/4} < 2^\bound,
\end{align}

\noindent as $3\cdot (13/16)^2 \leq 2^{3/4}$.

\smallskip\noindent
\textbf{Case $\mathbf{r=3}$.}
Let $\mathcal{T}$ be a maximum collection of vertex-disjoint triangles of $G$.
Set $G' = G - \cup_{T\in \mathcal{T}}V(T)$, let $M$ be a maximum matching in $G'$,
and let $Z = V(G')\setminus V(M)$.
Clearly, $G'$ is a $K_3$-free graph and $Z$ is an independent set.
Set $x= |\mathcal{T}|$, $y = |M|$ and $z = |Z|$ and note that $n = 3x + 2y + z$.

By Lemma~\ref{lemma:compatible-K3-K3}, we have $\ext_G(T_1,T_2)\leq 15$ for every $T_1,T_2\in\mathcal{T}$
and by Lemma~\ref{lemma:compatible-edge-Kr} we have $\ext_G(\{u,v\},T)\leq 8$ for every $\{u,v\}\in M$ and every $T\in\mathcal{T}$.
Moreover, since $G$ is $K_4$-free, by Proposition~\ref{lemma:compatible-vertex-clique}, we have $\ext_G(v,T)\leq 3$ for every $v\in Z$ and every $T\in\mathcal{T}$.
Since $G'$ is $K_3$-free, no vertex in $Z$ is adjacent to two vertices of the same edge in $M$,
and hence $\ext_G(u,\{v,w\})\leq 2$ for every $u\in Z$ and $\{u,v\}\in M$.
Finally, note that $\o{T}\leq 6$ for every $T\in\mathcal{T}$,
and since $G'$ is $K_3$-free, we have $\o{G[M]}\leq 2^{(2y)^2/4} = 2^{y^2}$.
Therefore, we have $\o{G} \leq	6^x \cdot 15^{x\choose 2}\cdot 8^{xy}\cdot 2^{y^2}\cdot 3^{xz}\cdot 2^{yz}=f(x,y,z)$.

\begin{claim}
\(f(x,y,z) < 2^\bound\) when (i) \(x\geq 3\) or (ii) \(z\geq 2\).
\end{claim}

\begin{claimproof}
Since $n=3x+2y+z$, we have that
\[ \frac{n^2-1}{4} = \frac{9x^2}{4}+3xy+y^2+\frac{3}{2}xz+yz+ -\frac{z^2-1}{4}. \]

\noindent We are left to prove that $x\log_26+\binom{x}{2}\log_2 15+xz\log_23\leq 9x^2/4+3xz/2-(z^2-1)/4$. By using the bounds $\log_2 15\leq 3.95$, $\log_2 6 \leq 2.6$ and $\log_23\leq1.6$ and multiplying the previous equation by $4$, we are left with the following inequality:
\begin{equation}
\label{eq:grandever5}
1.1x^2-2.5x-0.4xz+z^2-1>0.
\end{equation}

Note that $z^2-0.4xz\geq -0.04x^2$ and, moreover, that $x^2-2.5x-1>0$ for every $x\geq 3$. Finally, we are left with the case $x\in \{1,2\}$ and $z\geq 2$, which can be done by replacing each value of $x$ in~\eqref{eq:grandever5} and using that $z\geq 2$.
\end{claimproof}

\smallskip\noindent
Therefore, we may assume \(x\leq 2\) and \(z\leq 1\).
In this case, we need to explore the structure of the graph $G$ carefully.
Recall that  $y=|M|$ and $z=|Z|$, where $M$ is a maximum matching of $G'$ and $Z=V(G')\setminus V(M)$.
Since $n\geq 11$, we have $M\neq\emptyset$.

Suppose first that $x=2$ and let $T_1$ and $T_2$ be the triangles in $\mathcal{T}$.
Let $e$ be an edge of $M$ and $H=G[V(T_1)\cup V(T_2)\cup e \cup Z]$.
Since $|V(H)|\in\{8,9\}$ and $H$ is not a balanced complete bipartite graph, by Proposition~\ref{prop:small-cases}, we have $\o{H}< 2^{16+4z}$.
By Lemmas~\ref{lemma:compatible-vertex-clique} and~\ref{lemma:compatible-edge-Kr} and the fact that $G'$ is $K_3$-free, we have for every $e'\in M\setminus\{e\}$, that $\ext_G(e',Z)\leq 2^z$, $\ext_G(e',e)\leq 4$ and $\ext_G(e',T_i)\leq 8$ for $i=1,2$. We conclude that $\ext_G(e',H)\leq 2^z\cdot 4\cdot 8\cdot 8= 2^{8+z}$ for every $e'\in M\setminus\{e\}$.
Finally, we have $\ext_G(e',e'')\leq 4$ for every two edges $e'$ and $e''$ of $M$, and there are $2$ ways to orient each one of the $y-1$ edges of $M\setminus\{e\}$.
Therefore,
\begin{align}
		\label{eq:grandever6}
	\o{G}	&<	2^{16+4z} \cdot 2^{(8+z)(y-1)} \cdot 4^{y-1\choose 2} \cdot 2^{y-1}
		=	2^{((6+2y+z)^2-z)/4} = 2^{\lfloor n^2/4\rfloor},
\end{align}

\noindent where we used that $z^2=z$.

Thus, we may assume that $x=1$.
Let $T$ be the triangle in $\mathcal{T}$.
Since $n\geq 11$, we have $y\geq 2$.
Let  $e_1$ and $e_2$ be edges of $M$ and put $H=G[V(T)\cup e_1\cup e_2]$.
By Lemma~\ref{lemma:claim}, we have $\o{H}<2^{12}$.
For every $e\in M\setminus\{e_1,e_2\}$  we have
$\ext_G(e,e_1)\leq 4$ and $\ext_G(e,e_2)\leq 4$,
and, by Lemma~\ref{lemma:compatible-edge-Kr}, we have $\ext_G(e,T)\leq 8$,
and hence $\ext_G(e,H)\leq \ext_G(e,K)\cdot\ext_G(e,e_1)\cdot\ext_G(e,e_2) = 128$.
Also, by Proposition~\ref{lemma:compatible-vertex-clique},
for every vertex $u\notin V(T)\cup V(M)$ we have $\ext_G(u,T)\leq 3$,
and since $G'$ is $K_3$-free, $\ext_G(u,e) \leq 2$ for every $e\in M$.
Therefore, we have
\begin{equation}
\label{eq:grandever8}
	\o{G}	<	2^{12} \cdot 128^{y-2} \cdot 2^{(2y-4)^2/4}\cdot (3\cdot 2^y)^z
		\leq	2^{((3 + 2y+z)^2-1)/4}	< 2^\bound.
\end{equation}

\smallskip\noindent
\textbf{Case $\mathbf{r=2}$.}
Since $G$ is triangle-free, we have $\o{G} = 2^{|E(G)|}$.
Thus, by Mantel's Theorem, if $G$ is not isomorphic to $K_{\lfloor n/2\rfloor,\lceil n/2\rceil}$, we have
\begin{equation}
\label{eq:grandever9}
\o{G}< 2^\bound.
\end{equation}
Furthermore, $\o{G} = 2^\bound$ if and only if $G \simeq K_{\lfloor n/2\rfloor,\lceil n/2\rceil}$.
This completes the proof that for any \(n\)-vertex graph \(G\) with \(n\geq 8\), we have \(\o{G}\leq 2^\bound\).
Since inequalities~\eqref{eq:grandever1}--\eqref{eq:grandever9} are strict, we get that $\o{G} = 2^\bound$ if and only if $G \simeq K_{\lfloor n/2\rfloor,\lceil n/2\rceil}$, which concludes the proof of the theorem.
\end{proof}

\section{Open problems}
\label{sec:conc}

In this section we  discuss some open problems and directions for future research.
Given an oriented graph $\vec{H}$, recall that $D(n,\vec H)$ denotes the maximum number of $\vec{H}$-free orientations of~$G$, for all $n$-vertex graphs $G$.

\subsection{Avoiding an oriented graph}

In this paper we determine $D(n,\circ_3)$ for every possible $n$.
A natural problem is to extend our result to estimate the number of orientations of graphs avoiding strongly connected cycles $\circcycle_k$ for $k\geq 4$.
As far as we know, the following problem is open even for large $n$.

\begin{problem}\label{prob1}
Let $k\geq 4$. Determine $D(n,\circcycle_k)$ for every $n\geq 1$.
\end{problem}

An interesting problem is to determine $D(n,\vec H)$ for any oriented graph $\vec H$.
For a tournament $\vec T_k$ on $k$ vertices, $D(n,\vec T_k)$ was determined for sufficiently large $n$ by Alon and Yuster~\cite{AlYu06}.
For a moment, we consider edge colorings of graphs.
Denote by $F(n,k)$ the maximum number of $2$-edge colorings of a graph $G$ with no monochromatic $K_k$, 
among all graphs $G$ on \(n\) vertices.
The following result was proved by Yuster~\cite{Yu96} (for $k=3$) and Alon, Balogh, Keevash and Sudakov~\cite{AlBaKeSu04} (for $k\geq 4$).
\begin{lemma}\label{lemma:aux}
For every $k\geq 3$, there exists $n_0$ such that for all $n\geq n_0$ we have $F(n,k) = 2^\bound$.
\end{lemma}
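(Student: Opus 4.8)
The lower bound $F(n,k)\ge 2^{\bound}$ is immediate: the balanced complete bipartite graph $K_{\lfloor n/2\rfloor,\lceil n/2\rceil}$ has $\bound$ edges and contains no triangle, hence no $K_k$ for any $k\ge 3$, so each of its $2^{\bound}$ two-edge colorings is vacuously free of a monochromatic $K_k$. The substance is the matching upper bound, which I would attack first for $k=3$. Fix an $n$-vertex graph $G$. A $2$-edge coloring of $G$ with no monochromatic $K_3$ is precisely a decomposition $E(G)=R\cup B$ into two edge-disjoint triangle-free graphs, and since $B=E(G)\setminus R$ is determined by $R$, counting such colorings amounts to counting triangle-free subgraphs $R\subseteq G$ for which $G\setminus R$ is also triangle-free. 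The plan is to exploit the structure of triangle-free graphs: by the theorems of Erd\H{o}s--Kleitman--Rothschild and Pr\"omel--Steger there are only $2^{(1+o(1))\bound}$ triangle-free graphs on $n$ labelled vertices, all but a $2^{-\Omega(n)}$ fraction of which are bipartite.

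I would split the valid colorings of $G$ according to whether both color classes $R$ and $B$ are bipartite. In the main case, let $P_1,P_2$ be a bipartition of $R$ and $Q_1,Q_2$ a bipartition of $B$, and pass to the common refinement with parts $V_{ij}=P_i\cap Q_j$. An edge of $G$ inside some $V_{ij}$ would lie in neither $R$ nor $B$, so $G$ has none; an edge of $G$ inside $P_i$ is forced into $B$ and an edge inside $Q_j$ is forced into $R$; and the remaining edges, whose color is not forced, all run between $V_{11}$ and $V_{22}$ or between $V_{12}$ and $V_{21}$, so in particular they all lie in $E_G(P_1,P_2)$. Hence the number of valid colorings refining this fixed pair of bipartitions is at most $2^{|E_G(P_1,P_2)|}\le 2^{|P_1|\,|P_2|}\le 2^{\bound}$. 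When $G$ itself is bipartite the situation is trivial: every coloring is valid, and Mantel's theorem gives $2^{e(G)}\le 2^{\bound}$, with equality if and only if $G\simeq K_{\lfloor n/2\rfloor,\lceil n/2\rceil}$.

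What remains --- and where essentially all of the work lies --- is to pass from these per-bipartition estimates to the single clean bound $2^{\bound}$ valid for all large $n$, and to handle the colorings in which some class is non-bipartite. The non-bipartite case contributes only about $2^{\bound-\Omega(n)}$ colorings, again by the Pr\"omel--Steger count of non-bipartite triangle-free graphs. For the main case, a naive union bound over the $2^{O(n)}$ choices of the two bipartitions would lose a factor $2^{O(n)}$, so one argues by stability instead: if $G$ is far from bipartite then for every pair of bipartitions the ``free'' blocks between the parts $V_{ij}$ are genuinely small and the count falls well below $2^{\bound}$, whereas if $G$ is close to $K_{\lfloor n/2\rfloor,\lceil n/2\rceil}$ a direct edge-counting argument shows that any deviation from the extremal graph strictly decreases the number of valid colorings; this last step is what forces $n$ to be large and yields both the exact value and the characterization of the extremal graph. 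For $k\ge 4$ the same scheme runs with the Erd\H{o}s--Frankl--R\"odl structure theorem for $K_k$-free graphs in place of Erd\H{o}s--Kleitman--Rothschild and with Tur\'an's theorem in place of Mantel's; this is carried out by Yuster~\cite{Yu96} for $k=3$ and by Alon, Balogh, Keevash and Sudakov~\cite{AlBaKeSu04} for $k\ge 4$, and the hypergraph container method now yields the asymptotic form of either statement quickly. The main obstacle is exactly this stability step: the structure theory of $K_k$-free graphs delivers $2^{(1+o(1))\bound}$ with little effort, but removing the $o(1)$ --- establishing the exact value $2^{\bound}$ for every sufficiently large $n$, and that it is attained only by the balanced complete bipartite graph --- requires the delicate, graph-specific counting.
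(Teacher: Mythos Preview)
The paper does not prove this lemma at all: it is quoted as a known result, with attribution to Yuster~\cite{Yu96} for $k=3$ and to Alon, Balogh, Keevash and Sudakov~\cite{AlBaKeSu04} for $k\ge 4$, and is then used only as background in Section~\ref{sec:conc}. So there is no ``paper's own proof'' to compare against; what you have written is a proof sketch of a theorem the paper merely cites.

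That said, your sketch is a reasonable high-level summary of the stability-based route used in~\cite{AlBaKeSu04}, and you yourself flag exactly where the real work lies (removing the $o(1)$ via a stability argument). Two small caveats. First, your description is really an outline rather than a proof: the stability step you defer is the entire content of the upper bound, and nothing you wrote would let a reader reconstruct it. Second, Yuster's original argument for $k=3$ in~\cite{Yu96} is not the Erd\H{o}s--Kleitman--Rothschild/Pr\"omel--Steger route you describe; it is a short direct argument. If you intend this as a proof rather than a pointer to the literature, you would need to actually carry out the stability analysis (or, for $k=3$, reproduce Yuster's elementary argument). As a summary accompanying a citation, what you wrote is fine; as a self-contained proof, it is not.
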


Consider now the transitively oriented tournament $K_k^{\shortrightarrow}$ with $k$ vertices.
Using a simple argument, Alon and Yuster~\cite{AlYu06} used Lemma~\ref{lemma:aux} to prove that $D(n,K_3^{\shortrightarrow})=2^\bound$ for $n\geq 1$.
For $k\geq 4$, they proved that $D(n,K_k^{\shortrightarrow})=2^\bound$ for a (very) large $n$.
Thus, the following problem remains open.
\begin{problem}\label{prob1a}
Let $k\geq 4$. Determine $D(n,K_k^{\shortrightarrow})$ for every $n\geq 1$.
\end{problem}

\subsection{Avoiding families of oriented graphs}

Another direction of research arises when, instead of forbidding a fixed oriented graph, we forbid families of oriented graphs.
For example, one may consider orientations of graphs that avoid non-transitive tournaments.
Denote by $T_k(n)$ the maximum number of orientations of a graph $G$ in which \emph{every} copy of $K_k$ is transitively oriented, for every $n$-vertex graph $G$.
The following problem generalizes Theorem~\ref{thm:main}.
\begin{problem}\label{prob2}
Let $k\geq 4$. Determine $T_k(n)$ for every $n\geq 1$.
\end{problem}

Consider the number of orientations of graphs that avoids strongly connected tournaments.
We denote by $S_k(n)$ the maximum number of orientations of a graph $G$ in which \emph{no copy} of $K_k$ is strongly connected, for every $n$-vertex graph $G$.
 \begin{problem}\label{prob3}
Let $k\geq 4$. Determine $S_k(n)$ for every $n\geq 1$.
\end{problem}
Note that Problem~\ref{prob3} also generalizes Theorem~\ref{thm:main}.
We remark that it would be interesting to determine $T_k(n)$ and $S_k(n)$ even if only for very large $n$.
For related problems in the context of random graphs, the reader is referred to~\cites{AlKoMoPa14,CoKoMoMo20}.

\bibliographystyle{amsplain}

\bibliography{bibliografia}

\newpage
\section{Appendix}\label{sec:appendix}

Here we prove Proposition~\ref{prop:small-cases}, which states that for an $n$-vertex graph $G$ with $9\leq n\leq 7+\min\{\omega(G),8\}$ we have $\o{G} \leq 2^\bound$ and, furthermore, $\o{G} = 2^\bound$ if and only if $G \simeq K_{\lfloor n/2\rfloor,\lceil n/2\rceil}$.

Similarly to the proof of Theorem~\ref{thm:main},
we explore the structure of the graph $G$ depending on the size of its maximum clique.
By Mantel's Theorem, we have $\o{G}=2^\bound$ when $G$ is the balanced complete bipartite graph.
We show that if this is not the case, then
$
\o{G}<2^\bound.
$
To show that this holds we use straightforward computer methods to check some inequalities, 
namely, inequalities \eqref{eq:ver0}--\eqref{eq:ver8}.

\begin{proof}[Proof of Proposition~\ref{prop:small-cases}]
Let $G$ be an $n$-vertex graph and for simplicity put $r=\omega(G)$.
	Suppose $9\leq n\leq 7+\min\{r,8\}$ and let $W$ be a clique of size $|W|=\min\{r,8\}$ in $G$.
	Put $G' = G\setminus W$.
	Note that if $|W|=8$, then Proposition~\ref{prop:Alon-small-cases} implies $D(G',\circ_3)\leq (n-8)!$ and $D(G[W],\circ_3)\leq 2^{16}$, and Proposition~\ref{lemma:compatible-vertex-clique} implies $\ext_G(v,W)\leq 9$ for every $v\in V(G')$.
	Therefore, for every $9\leq n\leq 15=7+\min\{r,8\}$ we have
	\begin{equation}
	\label{eq:ver0}
	\o{G}\leq  (n-8)!\cdot 9^{n-8}\cdot2^{16} < 2^{\lfloor n^2/4\rfloor}.
	\end{equation}

	From now on we assume that $|W|\leq 7$, which implies $|W|=r$ and, from Proposition~\eqref{prop:Alon-small-cases}, we have $\o{G'} \leq (n-r)!$ and $D(G[W],\circ_3)\leq r!$.
Note that since $G$ has no clique of size $r+1$, for each $v\in V(G')$ we have $d_G(v,W)\leq r-1$, which implies from Proposition~\ref{lemma:compatible-vertex-clique} that $\ext_G(v,W)\leq r$ for every $v\in V(G')$.
Combining these facts, for $r\in\{6,7\}$ and $9\leq n\leq 7+r$ we have
	\begin{equation}
	\label{eq:ver1}
	\o{G}\leq (n-r)!\cdot r^{n-r}\cdot r!< 2^\bound.
	\end{equation}
	Therefore, we may assume that $r\leq 5$.
	Due to the different structure of the graphs with small clique numbers, we divide the rest of the proof according to the value of~$r$.

	\smallskip\noindent
	\textbf{Case $\mathbf{r=5}$.}
		Let $M$ be a maximum matching of $G'$, say with $x$ edges (\(0\leq x\leq \lfloor(n-5)/2\rfloor\)),
		and note that $G'' = G'[V(G') \setminus V(M)]$ is an independent set with $n-5-2x$ vertices.
		By Corollary~\ref{cor:compatible-edge-Kr}, we have $\ext_G(e,W) \leq 19$ for every $e\in M$.
		Therefore, for $9\leq n\leq 12$ and $2\leq x\leq \lfloor (n-5)/2\rfloor$, we have
		\begin{equation}
		\label{eq:ver2}
		\o{G} \leq (n-5)!\cdot 19^x\cdot 5^{n-5-2x}\cdot  5! < 2^\bound.
		\end{equation}
		Thus, we may assume that $x\leq 1$.
		This implies $G'$ is a star with at most $n-6$ edges or $G'$ is composed of one triangle and $n-8$ isolated vertices.
		Hence, $\o{G'}\leq 2^{n-6}$.
		Therefore, for $9\leq n\leq 12$ and $0\leq x\leq 1$, we have
		\begin{equation}
		\label{eq:ver3}
		\o{G} \leq 5!\cdot 19^x\cdot 5^{n-5-2x} \cdot 2^{n-6} < 2^\bound.
		\end{equation}

	\smallskip\noindent
	\textbf{Case $\mathbf{r=4}$.}
		First, suppose that $G'$ contains a clique $K$ with $4$ vertices.
		Let $G'' = G'[V(G')\setminus K]$ (note that $G''$ has $n-8$ vertices) and let $x$ be the number of edges in a maximum matching of $G''$ ($0\leq x\leq 1$).
		From Proposition~\ref{prop:Alon-small-cases} we have $\o{G[W\cup K]}< 2^{16}$ and from Proposition~\ref{lemma:compatible-vertex-clique}, since $G$ has no $K_5$, for every $v\in V(G'')$ we have $\ext_G(v,K)\leq 4$ and $\ext_G(v,W)\leq 4$.
		Furthermore, by Corollary~\ref{cor:compatible-edge-Kr}, for any edge $\{u,v\}$ of $G''$, we have $\ext_G(\{u,v\},K)\leq 13$ and $\ext_G(\{u,v\},W) \leq 13$.
		Therefore, for $9\leq n\leq 11$ and \(0\leq x\leq 1\) we have
		\begin{equation}
		\label{eq:ver4}
		\o{G}<  (n-8)!\cdot 13^{2x}\cdot 4^{2(n-8-2x)} \cdot 2^{16} <  2^\bound.
		\end{equation}
		
		Thus we may assume that $G'$ contains no copy of $K_4$.
		This allows us to use 
Lemma~\ref{lemma:compatible-K3-K3}.		
		Suppose that $G'$ contains two vertex-disjoint triangles.
		In this case, we have \(n\geq 10\).
		Let $V_1$ and $V_2$ be the vertex sets of these triangles, say \(V_2=\{u,v,w\}\), 
		and note that, since $n\leq 11$, there is one vertex
		that do not belong to $V_1\cup V_2$ in $G'$ if and only if \(n=11\).
		If \(n=11\), let \(z\) be this vertex.
		In this case, from Proposition~\ref{lemma:compatible-vertex-clique}, we have $\ext_G(z,V_1\cup V_2\cup W)\leq 3\cdot3\cdot 4 = 36$.
		Since $\ext_G(\{u,v\},W)\leq 13$ and $\ext_G(w,W) \leq 4$, we obtain that $\ext_G(V_2,W)\leq 52$.
		Note that $\o{G[W\cup V_1]}\leq 7!$ and  $\o{G[V_2]}\leq 6$ and, from Lemma~\ref{lemma:compatible-K3-K3} we obtain $\ext_G(V_1,V_2)\leq 15$. 
		Combining the above facts, we have
		\begin{align}
			\o{G}	&	\leq 6\cdot 15\cdot 52 \cdot 7! <  2^\bound, & &\text{for }n=10; \\
			\o{G}	&	\leq 6\cdot 36\cdot 15\cdot 52 \cdot 7! <  2^\bound,& &\text{for }n=11.\label{eq:ver5}
		\end{align}
				
		Thus, we may assume that $G'$ contains no two vertex-disjoint triangles.
		If $G'$ contains a triangle~$K$, then let $G'' = G'[V(G')\setminus K]$ (note that $G''$ has $n-7$ vertices) and let $x$ be the number of edges in a maximum matching of $G''$ ($0\leq x\leq \lfloor(n-7)/2)\rfloor$).
		Therefore, for $9\leq n\leq 11$ we have
		\begin{equation}
		\label{eq:ver6}
		\o{G}\leq  (2^x\cdot 4^{x\choose 2}) \cdot13^x \cdot 8^x\cdot 3^{n-7-2x} \cdot 4^{n-7-2x}\cdot 2^{x(n-7-2x)}\cdot 7! <  2^\bound.
		\end{equation}
		
		Finally, assume that $G'$ contains no triangles.
		Then, similarly as before, letting $x$ be the number of edges in a maximum matching of $G'$
		 ($0\leq x\leq \lfloor(n-4)/2)\rfloor$), for $9\leq n\leq 11$ we have
		\begin{equation}
		\label{eq:ver7}
		\o{G}\leq  (2^x\cdot 4^{x\choose 2}) \cdot13^x  \cdot 4^{n-4-2x}\cdot 2^{x(n-4-2x)}\cdot 4! <  2^\bound.
		\end{equation}

	\smallskip\noindent
	\textbf{Case $\mathbf{r=3}$.}
		In this case the graph $G$ has $9\leq n\leq 10$ vertices.
		We start by noticing that if $G$ contains three vertex-disjoint triangles, 
		then there are six possible orientations of the edges of each triangle and, by Lemma~\ref{lemma:compatible-K3-K3}, there are at most fifteen ways to orient the edges between the triangles. 
		Let $y$ be the number of vertices that are not in these triangles.
		Note that \(0\leq y\leq 1\) and \(y=1\) if and only if \(n=10\).
Since $G$ is $K_4$-free, in case $y=1$, Proposition~\ref{lemma:compatible-vertex-clique} implies that there are $3$ ways to orient the edges between the vertex outside the triangles and each of the triangles.
Therefore, for $9\leq n\leq 10$ we have
		\begin{equation}
		\label{eq:ver8}
		\o{G}\leq 6^3\cdot 15^3\cdot 3^{3y} <  2^\bound.
		\end{equation}

		From the above discussion, we may assume that $G$ contains at most two vertex-disjoint triangles.
		For the rest of the proof we have to analyze the structure of $G$ carefully.
		Thus we consider two possible cases, depending on the number of vertices of $G$.

		\smallskip\noindent
		\textbf{Subcase $\mathbf{n=9}$.}
		First suppose that $\delta(G)\leq 4$.
		Let $u$ be a vertex of minimum degree and note that if $u$ is contained in a triangle, then $\ext_G(u,G-u) \leq 3\cdot 2^2 < 2^4$ and by Proposition~\ref{prop:Alon-small-cases}, we have $\o{G-u}\leq 2^{16}$.
		In case no triangle contains $u$, Proposition~\ref{prop:Alon-small-cases} gives $\o{G-u}< 2^{16}$ and $\ext_G(u,G-u) \leq 2^4$.
		Therefore, we obtain
		\begin{equation}
		\label{eq:ver9}
		\o{G}\leq \o{G-u}\cdot \ext_G(u,G-u)< 2^{20} = 2^\bound.
		\end{equation}
		
		Thus we may assume \(\delta(G)\geq 5\).
		Suppose that $G$ contains two vertex-disjoint triangles with vertex sets $V_1$ and $V_2$ (recall that $G$ contains at most two vertex-disjoint triangles).
		Let $G'$ be the subgraph of $G$ induced by the vertices that are not in $V_1$ or $V_2$.
		Thus, since $G$ is $K_4$-free, each vertex of $G'$ has at most two neighbors
		in each of $V_1$ and $V_2$.
		Since $\delta(G)\geq 5$ and $G'$ is triangle-free,
		$G'$ is an induced path of length $2$, say $uvw$.
		Moreover, each of the vertices $u$ and $w$ has two neighbors in $V_1$ and also in $V_2$.
		The vertex $v$ has two neighbors in one of the triangles, say in the set $V_1$.
		Since $G$ is $K_4$-free, $u$ and $v$ have only one common neighbor in $V_1$, which implies that the subgraph \(H\) of $G$ induced by the vertices $V_1\cup\{u,v\}$ is a $K_{1,2,2}$.	
		Thus, by Proposition~\ref{prop:num_of_tfo-vertex-biclique}, we have $\o{H}\leq 82$.
		Also, $H'=G[V_2\cup\{w\}]$ is a copy of $K_4^-$, and hence $\o{H'}\leq 6\cdot 3=18$.
		Finally, applying Lemmas~\ref{lemma:compatible-vertex-clique},~\ref{lemma:compatible-vertex-K4-} and~\ref{lemma:compatible-K3-K3}, we obtain
		$\ext_G(u,V_2)\leq 3$,
		$\ext_G(w,V_1)\leq 3$,
		$\ext_G(V_1,V_2)\leq 15$,
		$\ext_G(v,V(H'))\leq 5$,
		and hence
		\begin{equation}
		\label{eq:ver10}
		\o{G}\leq 82\cdot 18\cdot 15\cdot 3\cdot 3\cdot 5 < 2^{20} = 2^\bound.
		\end{equation}

		Assume that $G$ contains one triangle with vertex set $V_1=\{u_1,u_2,u_3\}$,
		but does not contain two vertex-disjoint triangles.
		Let \(G'\) be the subgraph of \(G\) induced by the vertices that are not in \(V_1\).
		Since \(\delta(G)\geq 5\) and no vertex in \(G'\) is adjacent to more than two vertices in \(V_1\), 
		we have \(\delta(G')\geq 3\).
		Thus, by Mantel's Theorem \(G'\) is isomorphic to \(K_{3,3}\).
		It is not hard to show that, since  $G$ is $K_4$-free and $\delta(G)\geq 5$, the graph $G$ is isomorphic to the graph $K_{1,4,4}$.
		Therefore, by Proposition~\ref{prop:num_of_tfo-vertex-biclique}, we have
		\begin{equation}
		\label{eq:ver10}
		\o{G}=271614< 2^{20}=2^\bound.
		\end{equation}

		\smallskip\noindent
		\textbf{Subcase $\mathbf{n=10}$.}
		We proceed similarly as in the case above.
		First suppose that $\delta(G)\leq 5$ and let $u$ be a vertex of minimum degree.
		If $u$ is contained in a triangle, then the previous subcase for graphs with $9$ vertices gives $\ext_G(u,G-u) \leq 3\cdot 2^3 < 2^5$ and also by the previous case (or Mantel's Theorem in case $G-u$ is $K_3$-free) we have $\o{G-u}\leq 2^{20}$.
		On the other hand, if there is no triangle that contains $u$, 
		then the previous subcase for graphs with $9$ vertices gives $\o{G-u}< 2^{20}$ because \(G-u\) contains a triangle, and hence we have $\ext_G(u,G-u) \leq 2^5$.
		Therefore, we have
		\begin{equation}
		\label{eq:ver11}
		\o{G}\leq \o{G-u}\cdot \ext_G(u,G-u)< 2^{25} = 2^\bound.
		\end{equation}
		
		Thus, we may assume that $\delta(G)\geq 6$.
		Suppose that $G$ contains two vertex-disjoint triangles with vertex sets $V_1$ and $V_2$ (recall that $G$ contains at most two vertex-disjoint triangles).
		Let $G'$ be the subgraph of $G$ induced by the vertices that are not in $V_1$ or $V_2$.
		Note that since $G$ is $K_4$-free and $G'$ is triangle-free, the graph $G'$ is a cycle and each vertex of $G'$ has exactly two neighbors in each of $V_1$ and $V_2$.
		Let $a_1b_1$ and $a_2b_2$ be two non-incident edges of $G'$
		and put $H_i = G[V_i\cup \{a_i,b_i\}]$, for $i\in\{1,2\}$.
		Note that $H_1$ and $H_2$ are isomorphic to $K_{1,2,2}$.
		Then, by Proposition~\ref{prop:num_of_tfo-vertex-biclique}, we have $\o{H_1}, \o{H_2}\leq 82$.
		Analogous to the subcase for graphs with $9$ vertices, we have $\ext_G(a_ib_i,V_{3-i}) \leq 8$ for $1\leq i\leq 2$, $\ext_G(a_1b_1,a_2b_2)\leq 4$, and $\ext_G(T_1,T_2)\leq 15$.
		Therefore, we have
		\begin{equation}
		\label{eq:ver12}
		\o{G}\leq  82^2\cdot 15\cdot 8^2\cdot 4 < 2^{25} = 2^\bound.	
		\end{equation}
		
		Assume that $G$ contains one triangle with vertex-set $V_1$, but does not contain two vertex-disjoint triangles.
		Let $G' = G - V_1$ and note that $G'$ is a triangle-free graph with~$7$ vertices.
		By Mantel's Theorem, $|E(G')|\leq 12$.
		Since $\delta(G)\geq 6$, and every vertex of $G'$ has at most two neighbors in $V_1$,
		we have $\delta(G')\geq 4$, which implies that $|E(G')|\geq 14$, a contradiction.

		\smallskip\noindent
		\textbf{Case  $\mathbf{r=2}$.}
		Since $G$ is triangle-free, we have $\o{G} = 2^{|E(G)|}$.
		Thus, by Mantel's Theorem, if $G$ is not isomorphic to $K_{\lfloor n/2\rfloor,\lceil n/2\rceil}$, we have
		\begin{equation}
		\label{eq:ver13}
		\o{G}< 2^\bound.
		\end{equation}
		Furthermore, $\o{G} = 2^\bound$ if and only if $G \simeq K_{\lfloor n/2\rfloor,\lceil n/2\rceil}$, which completes the proof that for any $n$-vertex graph $G$ with $9\leq n\leq 7+\min\{\omega(G),8\}$ we have $\o{G} \leq 2^\bound$.
		Since inequalities~\eqref{eq:ver0}--\eqref{eq:ver13} are strict, we get that $\o{G} = 2^\bound$ if and only if $G \simeq K_{\lfloor n/2\rfloor,\lceil n/2\rceil}$, which concludes the proof of the proposition.
\end{proof}

\end{document}